\theoremstyle{definition}
\newtheorem{theorem}{Theorem}[section]
\newtheorem{prop}[theorem]{Proposition}
\newtheorem{cor}[theorem]{Corollary}
\newtheorem{defn}[theorem]{Definition}
\newtheorem{conj}[theorem]{Conjecture}
\newtheorem{rmk}[theorem]{Remark}
\newtheorem*{theorema}{Theorem A}
\newtheorem*{theoremb}{Theorem B}
\newtheorem*{theoremc}{Theorem C}
\newcommand\FF{{\mathcal F}}
\newcommand\LL{{\mathcal L}}
\newcommand\MM{{\mathcal M}}
\newcommand\PP{{\mathcal P}}
\newcommand\PMF{{\PP\kern-2pt\MM\FF}}
\newcommand\PML{{\PP\kern-2pt\MM\LL}}
\newcommand{\fsubd}{\mathrel{{\scriptstyle\searrow}\kern-1ex^d\kern0.5ex}}
\newcommand{\bsubd}{\mathrel{{\scriptstyle\swarrow}\kern-1.6ex^d\kern0.8ex}}
\newcommand{\fsubeq}{\mathrel{\raise-.7ex\hbox{$\overset{\searrow}{=}$}}}
\newcommand{\bsubeq}{\mathrel{\raise-.7ex\hbox{$\overset{\swarrow}{=}$}}}
\newcommand{\tsh}[1]{\left\{\kern-.9ex\left\{#1\right\}\kern-.9ex\right\}}
\numberwithin{equation}{section}
\newenvironment{myeq}[1][]
{\stepcounter{theorem}\begin{equation}\tag{\thetheorem}{#1}}
	{\end{equation}}
\newenvironment{meq*}[1][]
{\stepcounter{theorem}\begin{equation*}\tag{\thetheorem}{#1}}
	{\end{equation*}}
\newtheorem{subsec}[theorem]{}
\newcommand{\hK}{\hat{K}}
\newcommand{\hH}{\hat{H}}
\newcolumntype{C}{>{\centering\arraybackslash}m{1.5cm}} 
\begin{document}

\title[Coloring discrete pseudomanifolds]{Coloring discrete pseudomanifolds}

\author{Biplab Basak}
\address{Math Department\\ Indian Institute of Technology Delhi\\ New Delhi 110016\\ India}
\email{biplab@iitd.ac.in}
\author{Vanny Doem}
\address{Math Department\\ Indian Institute of Technology Kharagpur\\ Kharagpur 721302\\ India}
\email{vanny.doem@gmail.com}
\author{Chandal Nahak}
\address{Math Department\\ Indian Institute of Technology Kharagpur\\ Kharagpur 721302\\ India}
\email{cnahak@maths.iitkgp.ac.in}
\keywords{Combinatorial pseudomanifolds, chromatic number, geometric graph theory.}

\subjclass[2020]{Primary 05C15, 05C10; Secondary 52B70, 55U10}
\date{\today}

\maketitle

\begin{abstract}
\leftskip=0cm \rightskip=0cm
This paper presents three main results on coloring discrete $d$-pseudomanifolds: $(1)$ the general chromatic bounds $d+1 \le X(K) \le 2d+2$ for any $d$-pseudomanifold $K$; $(2)$ an improved bound $X(K) \le 2d+1$ for pseudomanifolds expressible as a Zykov join $K = S^k + K'$; $(3)$ the optimal bound $X(K)\leq\lceil 3(d+1)/2\rceil$ under the additional assumptions that the spherical join factor $S^k$ is built from even-cycles and its dimension $k$ is close to $d$.
\end{abstract}


\section{Introduction}
The chromatic theory of geometric graphs has its profound modern origin in the combinatorial topology pioneered by Steve Fisk in a seminal series of works \cite{Fis73, Fis1973, Fis74, Fis77}. Fisk investigated colorings of triangulated manifolds and demonstrated how purely combinatorial arguments could be used to solve geometric problems. However his framework was inherently tied to classical simplicial complexes and their topological realizations.

The translation of this paradigm into the pure language of finite graph theory was achieved by Oliver Knill through the theory of discrete manifolds \cite{Kni21}. By defining a discrete manifold $M$ purely as a finite simple graph where every unit sphere, the subgraph generated by the neighbors of $x$, is a $(d-1)$-sphere, Knill provided foundational bounds of chromatic number $X(M)$ for any discrete $d$-manifold $M$, \cite[Theorem 1]{Kni21},
\begin{myeq}\label{eq:bound knill}
    d+1\leq X(M)\leq 2d+2.
\end{myeq}

Furthermore using the Zykov join for two general finite graphs $G$ and $H$ (Definition \ref{def:zykov join}), where the chromatic number $X(G+H)=X(G)+X(H)$ \cite[\S3.1]{Kni21}, Knill constructed high-dimensional spheres with chromatic number scaling as $3k$ or $3k+1$ in \cite[\S3.2]{Kni21}. This leads to the conjecture that the optimal bounds might be tighter as \cite[\S1.2]{Kni21}
\begin{myeq}\label{intro:bound 2d+1}
    d+1\leq X(M)\leq 2d+1
\end{myeq}and
\begin{myeq}\label{intro:bound 1.5d}
    d+1\leq X(M)\leq\lceil 3(d+1)/2\rceil
\end{myeq}where $\lceil - \rceil$ is a ceiling function. Therefore Knill's work can be viewed as a graph-theoretic re-foundation and extension of the geometric coloring problems initiated by Fisk in \cite{Fis77}.

This article extends the chromatic theory from discrete $d$-manifolds to the significantly broader class of discrete $d$-pseudomanifolds (Definition \ref{def:pman}). A central question is whether the foundational bounds in \eqref{eq:bound knill} persist for this more general class. We affirm that the chromatic number of any discrete $d$-pseudomanifold $K$ satisfies the same robust bounds in \eqref{eq:bound knill} (Theorem \ref{thm:color pman}).
\begin{theorema}
  The chromatic number $X(K)$ of any $d$-pseudomanifold $K$ satisfies
   $$d + 1 \leq X(K) \leq 2d + 2.$$ 
\end{theorema}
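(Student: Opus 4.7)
Both inequalities will be proved by induction on the dimension $d$, exploiting the recursive structure built into the definition of a discrete pseudomanifold: each unit sphere $S(x)$ is itself a $(d-1)$-pseudomanifold.

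For the lower bound $d+1 \le X(K)$, the plan is to show by induction on $d$ that every $d$-pseudomanifold contains a $(d+1)$-clique, whence $X(K) \ge d+1$ just from the clique-number bound. The base case $d=0$ is trivial. For $d \ge 1$, pick any $x \in V(K)$; the sphere $S(x)$ is a $(d-1)$-pseudomanifold, so by the inductive hypothesis contains a $d$-clique $C$. Since $x$ is adjacent to every vertex of $S(x)$, the set $\{x\} \cup C$ is a $(d+1)$-clique in $K$.

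For the upper bound $X(K) \le 2d+2$, I would again induct on $d$, with $d=0$ trivial. In the inductive step the hypothesis yields $X(S(x)) \le 2d$ for every $x \in V(K)$, so the closed ball $B(x) = \{x\} \cup S(x)$ satisfies $X(B(x)) \le 2d+1$, since $x$ is adjacent to all of $S(x)$ and a single fresh color suffices for it. To globalize this local bound I would run a secondary induction on $|V(K)|$, modelled on Knill's manifold argument in \cite[Theorem 1]{Kni21}: the base case $|V(K)|=d+1$ has $K$ a single $d$-simplex with $X(K)=d+1$. For the inductive step, I would isolate a vertex $x$, use the inductive hypothesis to color the remaining vertices $V(K)\setminus\{x\}$ with at most $2d+2$ colors so that at most $2d+1$ colors appear on $S(x)$, and then assign $x$ one of the remaining colors.

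The principal obstacle is the extension step: $K \setminus \{x\}$ is not itself a $d$-pseudomanifold, so the inductive hypothesis does not apply directly, and a naive $(\le 2d)$-coloring of $S(x)$ need not extend. I would circumvent this by strengthening the inductive statement to a list-coloring version---every $d$-pseudomanifold is $(2d+2)$-choosable---so that a prescribed coloring on $S(x)$ with $\le 2d$ colors can be completed to a $(2d+2)$-coloring of all of $K \setminus \{x\}$. Alternatively, one can try to construct a vertex elimination order in which each vertex has back-degree at most $2d+1$, using that its back-neighbourhood lies inside its sphere; greedy coloring then uses at most $2d+2$ colors. The technical heart of the proof lies in ensuring such a list-extension or elimination order survives the singular points of $K$ where several "sheets" of the pseudomanifold meet, a difficulty absent in Knill's manifold setting; I would handle these by noting that the link at any singularity is still a lower-dimensional pseudomanifold and hence falls within the outer inductive hypothesis.
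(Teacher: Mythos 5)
Your lower bound is fine and is essentially the justification the paper leaves implicit: inducting on $d$ to produce a $d$-clique in the unit link $L(x)$ and coning it off with $x$ gives a $(d+1)$-clique, which is all the paper means when it says the bound $d+1\le X(K)$ is ``clear by Definition \ref{def:pman}.''

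The upper bound, however, is not a proof but a plan whose hardest step is left open, and neither of the two escape routes you sketch is established. You correctly identify the obstruction (deleting a vertex destroys the pseudomanifold structure, so the inner induction on $|V(K)|$ has no hypothesis to invoke), but then you replace the missing step by two unproved claims. The list-coloring strengthening (``every $d$-pseudomanifold is $(2d+2)$-choosable'') is at least as strong as the theorem and is simply asserted. The elimination-order alternative rests on a non sequitur: the fact that the back-neighbourhood of $x$ lies inside $S(x)$ bounds nothing, because $S(x)$ is a $(d-1)$-pseudomanifold with arbitrarily many vertices (already for $d=2$ it is a cycle $C_n$ with $n$ unbounded), so there is no reason the back-degree should be at most $2d+1$; establishing $(2d+1)$-degeneracy of $K$ would itself be a substantial theorem, not a remark. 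As it stands, the step that carries all the content of the upper bound is missing. For comparison, the paper takes an entirely different route that sidesteps vertex deletion: it passes to the dual graph $K^*$ (maximal $d$-simplices as vertices, adjacency along shared $(d-1)$-faces), shows $K^*$ is $(d+1)$-regular and triangle-free (Proposition \ref{prop:daul g tri-free}), splits it into two spanning forests, and spends one batch of $d+1$ colors on each forest. If you want to rescue your approach, you must actually prove the choosability (or some other extension lemma for partial colorings of $L(x)$); without that, the argument does not close.
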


 Follow that the chromatic number $X(S^k)$ can be relatively small (Proposition \ref{prop:color sphere min}) and construct a discrete $d$-pseudomanifold $K$ in such a way that it can be decomposed as $K=S^k+K'$. We then obtain the tighter bounds in \eqref{intro:bound 2d+1} (Theorem \ref{thm:bound 2d+1}).
\begin{theoremb}
If $K=S^k+K'$ such that $S^k$ is a $k$-sphere and $K'$ is a $(d-k-1)$-pseudomanifold,
$$d+1\leq X(K)\leq 2d+1.$$
\end{theoremb}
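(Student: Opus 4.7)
The statement decomposes naturally into a lower bound and an upper bound. For the lower bound $X(K) \geq d+1$, I would first confirm that $K = S^k + K'$ is itself a discrete $d$-pseudomanifold --- the dimension identity $k + (d-k-1) + 1 = d$ is automatic, and the defining axioms (every ridge lying in exactly two facets, together with strong connectedness of the dual graph) pass through the Zykov join of a $k$-sphere and a pseudomanifold. Theorem A then yields $X(K) \geq d+1$ immediately. A cleaner alternative is to observe directly that any facet of $S^k$ (a $(k+1)$-clique) together with any facet of $K'$ (a $(d-k)$-clique) spans a $(d+1)$-clique in the join, whence $X(K) \geq \omega(K) \geq d+1$.

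For the upper bound the central device is the Zykov join chromatic identity $X(G+H) = X(G) + X(H)$, which splits the problem into the two factors:
$$X(K) = X(S^k) + X(K').$$
Applying Theorem A to the $(d-k-1)$-pseudomanifold $K'$ gives $X(K') \leq 2(d-k-1) + 2 = 2d - 2k$, so the entire task reduces to the sphere-specific inequality
$$X(S^k) \leq 2k+1,$$
which shaves one color off the generic $2k+2$ bound that Theorem A would give for $S^k$ treated merely as a $k$-pseudomanifold. The proof of Theorem B then collapses to a one-line addition: $X(K) \leq (2k+1) + (2d-2k) = 2d+1$.

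The sphere chromatic bound above is the essential input, and I would appeal to Proposition \ref{prop:color sphere min} for it. Its natural proof proceeds by induction on $k$: the base cases $k=0$ (two isolated vertices, $X(S^0)=1$) and $k=1$ (cycles, $X \leq 3 = 2\cdot 1 + 1$) are immediate; the inductive step exploits the fact that every unit sphere of $S^k$ is a $(k-1)$-sphere, so a vertex-peeling or Fisk-style recoloring argument around a carefully chosen vertex star reduces to the inductive hypothesis. An alternative route is to use an explicit join decomposition of $S^k$ modeled on the sphere constructions of \cite[\S3.2]{Kni21}, where the additive behavior of the chromatic number under joins tracks the chromatic deficit at each step.

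The principal obstacle is squarely the sphere inequality $X(S^k) \leq 2k+1$: it is a genuine improvement on Knill's general $2d+2$ theorem restricted to the sphere class, and absent it the Zykov strategy regresses to the looser $2d+2$ estimate already supplied by Theorem A. A secondary, more routine bookkeeping issue is verifying that the join construction in the discrete pseudomanifold framework adopted here really yields a bona fide $d$-pseudomanifold, so that Theorem A may be legitimately invoked on $K$ in the lower-bound step.
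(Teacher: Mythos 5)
Your overall architecture coincides with the paper's: both proofs reduce Theorem B to the additivity $X(K)=X(S^k)+X(K')$, bound $X(K')\leq 2(d-k-1)+2=2d-2k$ by Theorem~\ref{thm:color pman}, and then need a bound on $X(S^k)$ that beats the generic $2k+2$. The lower bound via the $(d+1)$-clique formed by a facet of $S^k$ joined to a facet of $K'$ is fine.

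The gap is in the one remaining input, $X(S^k)\leq 2k+1$ for an \emph{arbitrary} $k$-sphere, and neither of your two justifications delivers it. Proposition~\ref{prop:color sphere min} is an existence statement: it exhibits particular \emph{even-cycle} spheres $C_n+\cdots+C_n$ ($n$ even) realizing $X=2\lceil(k+1)/2\rceil$; it is not an upper bound valid for every $k$-sphere, so citing it here is a misapplication. Your fallback induction also does not close: knowing $X(L(v))\leq 2k-1$ for the unit sphere of a peeled vertex $v$ says nothing about how many colors appear among the (possibly very many) neighbors of $v$ in a $(2k+1)$-coloring of $S^k-v$, and spheres admit no degree or degeneracy bound that would make the greedy extension work; indeed $X(S^k)\leq 2k+1$ is exactly the sphere case of Conjecture~\ref{conj:bound 2d+1} and is not established anywhere in the paper. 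The paper instead routes through \eqref{eq:color spheres}: it asserts that every discrete $k$-sphere decomposes as a Zykov join of cycles, each contributing at most $3$ colors, giving $X(S^k)\leq 3\lceil(k+1)/2\rceil$, and then runs a parity case analysis on $k$ to land at $2d+1$. (That decomposition claim is itself the load-bearing hypothesis of the paper's argument, but it is the input you would need to import; as written, your proof has no valid source for the sphere inequality it rests on.)
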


Furthermore we demonstrate that under optimal conditions on the spherical factor $S^k$, say $S^k$ is built from even cycles and is of sufficiently high dimension, the conjectured optimal ceiling in \eqref{intro:bound 1.5d} is attainable (Theorem \ref{thm:bound ceiling 1.5(d+1)}).
\begin{theoremc}
If $K=S^k+K'$ such that $S^k$ is an even-cycle $k$-sphere, $K'$ is a $(d-k-1)$-pseudomanifold, and $k$ is close enough to $d$,
$$d+1\leq X(K)\leq \lceil 3(d+1)/2\rceil.$$
\end{theoremc}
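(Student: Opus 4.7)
The plan is to exploit the Zykov join's behaviour under chromatic numbers in order to split $X(K)$ into contributions from the spherical factor $S^k$ and from the residual pseudomanifold $K'$. The lower bound $d+1 \leq X(K)$ is immediate from Theorem A applied to $K$ itself, so the entire work lies in establishing the upper bound.

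First, I would invoke the Zykov join formula of \cite[\S3.1]{Kni21}: since $K = S^k + K'$,
\[
X(K) = X(S^k) + X(K').
\]
The factor $K'$ is a $(d-k-1)$-pseudomanifold, so Theorem A gives $X(K') \leq 2(d-k-1)+2 = 2(d-k)$. Thus only a sharp estimate on $X(S^k)$ remains.

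Next I would compute $X(S^k)$ under the hypothesis that $S^k$ is an even-cycle $k$-sphere: it decomposes as an iterated Zykov join $C_{2n_1} + \cdots + C_{2n_m}$ of bipartite $1$-spheres. Each factor contributes dimension $1$ and chromatic number $2$, which forces $k = 2m-1$ to be odd and, by iterating the chromatic additivity of the join,
\[
X(S^k) = 2m = k+1.
\]
Combining with the preceding bound yields $X(K) \leq (k+1) + 2(d-k) = 2d - k + 1$.

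The final step is to translate the clause ``$k$ is close enough to $d$'' into the inequality $2d - k + 1 \leq \lceil 3(d+1)/2 \rceil$. Solving for $k$ produces a threshold of the form $k \geq \lfloor (d-1)/2 \rfloor$, together with the parity constraint that $k$ be odd (forced by the even-cycle decomposition) and the range restriction $k \leq d-1$ (so that $K'$ has nonnegative dimension). The hypothesis should be interpreted as $k$ lying at or above this threshold, and under this condition the desired upper bound follows immediately.

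The main obstacle I foresee is pinning down the precise definition of an even-cycle $k$-sphere so that the equality $X(S^k) = k+1$ is delivered by Zykov additivity alone; if the notion is more permissive than an iterated join of even cycles, a small combinatorial colouring argument on $S^k$ would be required to recover the same value. A secondary bookkeeping nuisance arises in the last arithmetic step, where the two parities of $d$ must be separated and compatibility with the odd value of $k$ verified.
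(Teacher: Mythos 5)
Your proposal is correct and follows essentially the same route as the paper: Zykov additivity $X(K)=X(S^k)+X(K')$, the Theorem~A bound $X(K')\leq 2(d-k)$, the value $X(S^k)=2\lceil(k+1)/2\rceil$ for an even-cycle sphere, and then arithmetic to turn ``$k$ close to $d$'' into the threshold $k\geq d/2-1$ ($d$ even) or $k\geq(d+1)/2-1$ ($d$ odd), which is exactly the paper's condition \eqref{eq: k close d}. Your intermediate bound $2d-k+1$ is in fact the correct consequence of these ingredients (the paper states $2d-k$), and your observation that a pure iterated join of cycles forces $k$ odd is a parity point the paper glosses over by allowing an extra $S^0$ factor; neither discrepancy changes the conclusion.
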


\subsection{Organization}
In \S\ref{sec:prelim} we recall some preliminaries that are used throughout the paper.  The \S\ref{sec:chrom pseudoman} deals with the case of chromatic number of a $d$-pseudomanifold proving Theorem A, and \S\ref{sec:chrom arith} deals with the foundational results for proving Theorems B and C in \S\ref{sec:sharper bounds}. 
\subsection{Notation} Throughout this paper we use the following notations. 
\begin{itemize}
\item $G$ and $H$ denote finite graphs.
\item $X(-)$ is for the chromatic number.
\item $K$ stands for a discrete pseudomanifold, and $M$ for a discrete manifold. For convenience we sometimes omit the prefix `discrete'.
\end{itemize}

\noindent
{\bf Acknowledgments.}
The first author (Biplab Basak) was funded by the Mathematical Research Impact Centric Support (MATRICS) Research Grant (No. MTR/2022/000036), SERB, and the second by the Indian Council for Cultural Relations (ICCR), Research Grant (No. PHN/ICCR/321/1/2021), India.
\section{Preliminaries}\label{sec:prelim}
\subsection{Dualities in graphs}
Let $G=(V,E)$ be a graph with vertices $V$ and edges $E$. We then recall two types of dualities in graph-theoretical notions as follows.
\begin{defn}\label{def:compl dual}
    Let $H$ be a subgraph of a finite simple graph $G$. Define a complementary dual graph $\hat{H}$ of a subgraph $H$ in $G$ to be the intersection of all unit links
    $$\hat{H}=\bigcap_{y\in H} L(y).$$
\end{defn}
For examples, the complementary dual graph of the empty graph $0$ in $G$ is the $G$ itself (and vice versa), the complementary dual graph of a $1$-vertex subgraph $x$ in $G$ is the unit link $L(x)$ in $G$, and the complementary dual graph of a complete graph $K_n$ in $K_{m}$ is the $K_{m-n}$, formed by the complementary vertices of $K_n$. 
\begin{defn}
    A dual graph $G^* = (V^*, E^*)$ of a $d$-graph $G$ is defined to have the maximal $d$-simplices as vertices and connects two different maximal simplices if they intersect in a $(d-1)$-simplex.
\end{defn}
For examples, the dual graph of $W_n$ a wheel graph with a cyclic graph $C_n$ as its boundary is the $C_n$, the dual graph of the octahedron graph ($2$-sphere) is the cube graph, and the dual graph of the $3$-sphere is the tesseract. Notice that when we say spheres, we mean topological discrete spheres in the theory of discrete manifolds in \cite[\S1.1]{Kni21}.


\subsection{Arithmetic on graphs}
In this subsection we recall two algebraic operations on graphs: The Zykov join first introduced by Zykov in \cite{Zyk52}, and the Cartesian simplex product by Stanley-Reisner \cite{Kni17}.
\begin{defn}\label{def:zykov join}
Let $G=(V,E)$ and $H=(W,F)$ be finite simple graphs. Define the Zykov join $G+H$ to be the disjoint union $G \cup H$ where every vertex in $G$ is connected to every vertex in $H$. Algebraically, 
$$G+H=(V \cup W, E\cup F \cup VW)$$
where $VW$ denotes all the edges connecting any vertex $V$ to any vertex $W$ as a sum. 
\end{defn}

\begin{defn}\label{def:cartesian pro}
   For two graphs $G$ and $H$, the Cartesian simplex product $(G \times H)_1$ is the graph with the vertices as the pairs $(g, h)$, where $g$ and $h$ are complete subgraphs of $G$ or $H$ and $(g, h)$ and $(u, v)$ are connected if they are different and either $g \subset u, h \subset v$ or $u \subset g, v \subset h$. Algebraically, 
   $$(G\times H)_1 = (c(G)\times c(H), \{(a,b) ~|~ a\subset b \text{ or } b\subset a\}).$$  
\end{defn}
If $H=1$ is the one-point graph, the product $(G \times 1)_1$ is the Barycentric refinement of $G$.

\section{Chromatic Numbers of Pseudomanifolds}\label{sec:chrom pseudoman}
There is a geometric construction of topological discrete spheres, which leads to a new formulation of a topological discrete manifold in terms of graph-theoretical notions \cite[\S1.1]{Kni21}. In this section we define a new class of $d$-graphs, which we call a discrete $d$-pseudomanifold. For conventional purpose we sometimes omit the prefix `discrete'. 

\begin{defn}\label{def:pman}
A discrete $d$-pseudomanifold is a finite simple graph $K$ where every unit link $L(x)$, the subgraph generated by the neighbors of $x$, is a $(d-1)$-pseudomanifold. It is primed with the presumption that the empty graph $\emptyset$ is a $(-1)$-link, and a $1$-pseudomanifold is a cyclic graph $C_n$ for $n\geq 4$.
\end{defn}
In Definition \ref{def:pman} notice that a unit link $L(x)$ is essentially a pseudomanifold, which is not necessarily a topological sphere, as introduced in \cite[\S1.1]{Kni21} and \cite[\S8]{Kni14}. Therefore a pseudomanifold is, in general, not a topological manifold anymore.
\begin{defn}\label{def:sub pman}
    A subpseudomanifold $K'$ of a pseudomanifold $K$ is also a pseudomanifold whose vertex and edge sets are subsets of the vertex and edge sets of the pseudomanifold.
\end{defn}
Following Definitions \ref{def:compl dual} and \ref{def:pman}, we generate the following facts.
\begin{prop}\label{prop:comp dual k n}
For a complete subgraph $K_n\leq K$, the complementary dual $\hK_n$ is a $(d-n)$-pseudomanifold. 
\end{prop}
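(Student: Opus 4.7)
The plan is to prove this by induction on $n$. The base cases fall directly out of the definitions: for $n=0$, the complementary dual of the empty graph is $K$ itself, which is a $d$-pseudomanifold by assumption; for $n=1$, we have $\hat{K}_1 = L(x)$ for a single vertex $x$, and this is a $(d-1)$-pseudomanifold by Definition \ref{def:pman}.

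For the inductive step, assume the statement holds for some $n-1 \geq 1$, so that $\hat{K}_{n-1}$ is a $(d-n+1)$-pseudomanifold. Given a complete subgraph $K_n \leq K$, pick any vertex $v \in K_n$ and write $K_n = K_{n-1} \cup \{v\}$ where $K_{n-1}$ is the remaining complete subgraph on $n-1$ vertices. Directly from Definition \ref{def:compl dual},
$$\hat{K}_n \;=\; \bigcap_{y \in K_n} L(y) \;=\; \Bigl(\bigcap_{y \in K_{n-1}} L(y)\Bigr) \cap L(v) \;=\; \hat{K}_{n-1} \cap L(v),$$
with the edge sets matching since all these subgraphs are induced from $K$. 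Because $K_n$ is complete, $v$ is adjacent to every vertex of $K_{n-1}$, so $v \in \hat{K}_{n-1}$. A vertex $w$ lies in $\hat{K}_{n-1} \cap L(v)$ precisely when $w$ is adjacent to $v$ and lies in $\hat{K}_{n-1}$; this is exactly the condition for $w$ to lie in the unit link of $v$ computed inside the graph $\hat{K}_{n-1}$. Hence $\hat{K}_n$ coincides with $L_{\hat{K}_{n-1}}(v)$.

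The inductive hypothesis says $\hat{K}_{n-1}$ is a $(d-n+1)$-pseudomanifold, and applying Definition \ref{def:pman} to the vertex $v$ inside it gives that $L_{\hat{K}_{n-1}}(v)$ is a $(d-n)$-pseudomanifold, completing the induction. The main delicate point is the second equality above, namely the identification $\hat{K}_n = L_{\hat{K}_{n-1}}(v)$: one must be careful that the intersection of induced subgraphs recorded by Definition \ref{def:compl dual} really reproduces the induced neighborhood of $v$ inside $\hat{K}_{n-1}$ at the level of both vertex and edge sets, and that the chain of complementary duals is independent of the order in which the vertices of $K_n$ are peeled off — both of which follow immediately because every subgraph in sight is an induced subgraph of $K$.
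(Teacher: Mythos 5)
Your proof is correct and follows essentially the same route as the paper's: induction on $n$, the identity $\hat{K}_n = \hat{K}_{n-1}\cap L(v)$, the observation that $v\in\hat{K}_{n-1}$ because $K_n$ is complete, and the identification of $\hat{K}_n$ with the unit link $L_{\hat{K}_{n-1}}(v)$ inside the $(d-n+1)$-pseudomanifold $\hat{K}_{n-1}$. If anything, your version states the key identification $\hat{K}_n = L_{\hat{K}_{n-1}}(v)$ more cleanly than the paper's displayed equation, which contains a slip writing the final term as $\hat{K}_{n-1}$ rather than $\hat{K}_n$.
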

\begin{proof}
We proceed by induction on indexing $n$. From the induction step
$$\hat{K}_{n-1}=\bigcap_{y\in K_{n-1}}L(y)$$
is a $(d-n+1)$-pseudomanifold. Now we prove for $n$. Write $K_n=\{v_1,v_2,\cdots,v_n\}$. Fix $v=v_n$ and write $K_{n-1}-v$. Following Definition \ref{def:compl dual} we have
\begin{align*}
    \hat{K}_{n}=\bigcap_{y\in K_n}L(y)=\bigcap_{y\in K_{n-1}}L(y)\cap L(v)=\hat{K}_{n-1}\cap L(v).
\end{align*}
 Then we must verify that $v\in\hat{K}_{n-1}$. For every $y\in K_{n-1}$, $v$ is adjacent to $y$ for $v\neq y$ since $K_n$ is a complete graph. Hence for every $y\in K_{n-1}$ we obtain $v\in L(y)$, that is, $v\in\hat{K}_{n-1}$.

 Now consider $L_{\hat{K}_{n-1}}(v)$ the unit link $L(v)$ within the $\hat{K}_{n-1}$. Since $\hat{K}_{n-1}$ is an induced subgraph of $K$,
 $$L_{\hat{K}_{n-1}}(v)=\{u\in\hat{K}_{n-1} ~|~ u\text{ adjacent to } v\in G\}=\hat{K}_{n-1}\cap L(v)=\hat{K}_{n-1}.$$
 As $\hK_{n-1}$ is a $(d-n+1)$-pseudomanifold by the induction hypothesis, the unit link of any vertex in $\hat{K}_{n-1}$, the $L_{\hat{K}_{n-1}}$ is a $((d-n+1)-1)=(d-n)$-pseudomanifold. This shows that $\hat{K}_{n}$ is a $(d-n)$-pseudomanifold, which proves the statement.
\end{proof}
\begin{prop}
For any subgraph $H\leq K$, the complementary dual $\hH$ is either the empty graph, a subpseudomanifold, a complete graph, a pyramid, or the entire pseudomanifold $K$.
\end{prop}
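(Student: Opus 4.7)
The plan is to organize the argument by case analysis on the clique structure of the vertex set of $H$, using Proposition \ref{prop:comp dual k n} as the principal tool. A preliminary observation is that $\hat{H} = \bigcap_{y \in H} L(y)$ is indexed by the vertices of $H$ and so depends only on $V(H)$; thus one may identify $H$ with the induced subgraph of $K$ on $V(H)$.

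Two extreme cases are immediate. If $H = \emptyset$, then by the convention on empty intersections one has $\hat{H} = K$. If $V(H)$ spans a complete subgraph $K_n$ of $K$, Proposition \ref{prop:comp dual k n} gives $\hat{H} = \hat{K}_n$, a $(d-n)$-pseudomanifold: for $1 \leq n \leq d$ this is a subpseudomanifold, and for $n = d+1$ it is the $(-1)$-pseudomanifold, namely $\emptyset$.

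The substantive case is when $V(H)$ fails to be a clique. Here I would proceed by induction on $|V(H)|$, based on the identity
\[
\hat{H} \;=\; \hat{H'} \cap L(y), \qquad H' = H \setminus \{y\},
\]
for some vertex $y \in V(H)$. Assuming inductively that $\hat{H'}$ lies in the target list $\mathcal{L} := \{\emptyset,\ \text{subpseudomanifold},\ \text{complete graph},\ \text{pyramid},\ K\}$, one must verify that $\hat{H'} \cap L(y) \in \mathcal{L}$. The easy members of $\mathcal{L}$ are closed under $G \mapsto G \cap L(y)$ essentially for free: $\emptyset \cap L(y) = \emptyset$; $K \cap L(y) = L(y)$ is a subpseudomanifold; and intersecting a clique with $L(y)$ retains only the vertices adjacent to $y$, giving another clique. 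The subpseudomanifold and pyramid cases require actual work and should be analyzed by considering how $y$ sits with respect to the vertex set of $\hat{H'}$, in particular whether $y$ is adjacent to all, to some, or to none of its vertices, and whether $y$ is adjacent to a putative apex in the pyramid case.

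The main obstacle is precisely this closure step for the subpseudomanifold and pyramid cases. A priori, intersecting a subpseudomanifold $P \subseteq K$ with a unit link $L(y)$ need not be a pseudomanifold; one has to exploit that both $P$ and $L(y)$ are embedded in the same ambient $d$-pseudomanifold $K$ and that $P$ itself arises as an iterated link-intersection, so the analysis can be factored through a maximal clique $C \subseteq H'$ and Proposition \ref{prop:comp dual k n} applied at each stage. The pyramid case is the most delicate, since a pyramid has a distinguished apex whose adjacency to $y$ governs whether the intersection remains a pyramid, collapses to the base, degenerates to a clique, or vanishes. An auxiliary lemma isolating how unit links cut cones inside a $d$-pseudomanifold will probably be needed to keep the bookkeeping of the apex clean.
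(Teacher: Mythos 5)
Your overall strategy coincides with the paper's: induction on $|V(H)|$, peeling off one vertex $y$ via the identity $\hat{H}=\hat{H}'\cap L(y)$ with $H'=H-y$, and then a case analysis according to which member of the target list $\hat{H}'$ is. Your preliminary remarks (that $\hat{H}$ depends only on $V(H)$, that $\hat{\emptyset}=K$, and that the clique case is dispatched by Proposition~\ref{prop:comp dual k n}) are correct and in fact slightly cleaner than the paper's setup. The easy closure statements you give for $\emptyset$, $K$, and complete graphs are also exactly what the paper does.

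The genuine gap is that you never carry out the step you yourself identify as ``the main obstacle'': showing that the intersection of a subpseudomanifold $P$ with a unit link $L(y)$, and of a pyramid with $L(y)$, again lands in the list. As written, your proposal defers this to a hoped-for auxiliary lemma about how links cut cones, and the suggested reduction ``through a maximal clique $C\subseteq H'$'' does not obviously apply, since in the subpseudomanifold case $\hat{H}'$ need not arise from a clique at all. Without that closure argument the induction does not close, so the proposal is an outline rather than a proof. For comparison, the paper resolves this step by a case split on whether $y$ coincides with, is adjacent to, or is not adjacent to the apex of the pyramid, and in the subpseudomanifold case simply asserts the conclusion by appeal to ``the local property of pseudomanifolds'' --- so you have correctly located the one point where the argument needs real content; you just have not supplied that content, and neither, in any detail, does the paper.
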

\begin{proof}
We proceed by induction on the cardinality of the vertices $|V(H)|=k$. By the induction step the statement holds for all subgraphs with $|V(H)|\leq k$. Now we prove for $k+1$. Let $H$ be a subgraph with $|V(H)|=k+2\geq 2$. Fix $v\in H$ and write $H'=H-v$. Then
$$\hH=\hH'\cap L(v).$$
By the induction step we have $\hH'$ is one of the following cases.

Case $1$: $\hH'=\emptyset\text{ or } \hH'=K$. If $\hH'=\emptyset$, then $\hH=\emptyset$. If $\hH'=K$, then $\hH=L(v)$ is essentially a $(d-1)$-pseudomanifold by Definition \ref{def:pman}

Case $2$: $\hH'=K'$ is a subpseudomanifold. Hence we have $\hH=K'\cap L(v)$. Since $K'$ and $L(v)$ are subpseudomanifolds, $K'\cap L(v)$ is an induced subgraph of both. Following the local property of pseudomanifolds, the $K'\cap L(v)$ is either the empty graph, a subpseudomanifold, a complete subgraph (if the intersection is a clique), or a pyramid (if the intersection is a cone over a subpseudomanifold). 

Case $3$: $\hH'=K_n$ is a complete graph. Thus $\hH=K_n\cap L(v)$. Since any induced subgraph of a complete graph $K_n$ is complete, the $\hH$ is either $\emptyset$ or a complete subgraph.

Case $4$: $\hH'$ is a pyramid $P=K'+w$ where $K'$ is subpseudomanifold. Then 
$$\hH=P\cap L(v)=(K'+w)\cap L(v).$$ 
If $v=w$, then $\hH=K'\cap L(v)$ reduces to Case $2$. Moreover if $v\neq w$ and $v$ is adjacent to $w$, then $w\in L(v)$ and $\hH$ may retain the pyramid structure or be a complete graph. If $v\neq w$ and $v$ is not adjacent to $w$, then $w\notin L(v)$ and $\hH=K'\cap L(v)$ is again falling into Case $2$.

With all these demonstrations, we then complete the proof.
 \end{proof} 
 We now produce necessary elements, on the duality of $d$-pseudomanifold $K$, for proving the chromatic number $X(K)$ in Theorem \ref{thm:color pman}.  Notice that the dual $K^*$ of a $d$-pseudomanifold $K$ is $(d + 1)$-regular, since every simplex is connected with exactly $d + 1$ neighbors.
\begin{prop}\label{prop:daul g tri-free}
   For every $d$-pseudomanifold $K$, the dual graph $K^*$ is triangle-free. 
\end{prop}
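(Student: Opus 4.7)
The plan is to derive a contradiction by assuming three distinct maximal $d$-simplices $\sigma_1, \sigma_2, \sigma_3$ form a triangle in $K^*$. By the definition of $K^*$, this means each pairwise intersection $\tau_{ij} := \sigma_i \cap \sigma_j$ is a $(d-1)$-simplex (i.e., a $K_d$). I would localize the contradiction in the complementary dual of the triple intersection $\rho := \sigma_1 \cap \sigma_2 \cap \sigma_3$, using Proposition \ref{prop:comp dual k n} to control the combinatorial structure of $\hat\rho$.

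First I would show that the three shared $(d-1)$-faces $\tau_{12}, \tau_{13}, \tau_{23}$ are pairwise distinct. If, say, $\tau_{12} = \tau_{13} =: \tau$, then the common face $\tau$ would lie in three distinct maximal simplices, forcing $\hat\tau$ to contain at least three vertices. But Proposition \ref{prop:comp dual k n} identifies $\hat\tau$ as a $0$-pseudomanifold, which under the paper's convention consists of exactly two vertices; this is precisely the content of the $(d+1)$-regularity of $K^*$ noted in the paragraph just before the proposition, as each $(d-1)$-face of a maximal simplex is shared with exactly one other maximal simplex. This rules out coincidence of the shared faces at codimension one.

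Next, since $\tau_{12}$ and $\tau_{13}$ are two distinct $(d-1)$-faces of the same $d$-simplex $\sigma_1$, each obtained by deleting a single vertex from $\sigma_1$, their intersection $\rho = \tau_{12} \cap \tau_{13}$ has exactly $d-1$ vertices, hence $\rho$ is a $(d-2)$-simplex. Writing $\sigma_1 = \rho \cup \{a, b\}$ with $\tau_{12} = \rho \cup \{b\}$ and $\tau_{13} = \rho \cup \{a\}$, one obtains $\sigma_2 = \rho \cup \{b, v\}$ and $\sigma_3 = \rho \cup \{a, w\}$ for suitable vertices $v, w \notin \sigma_1$. The requirement $|\sigma_2 \cap \sigma_3| = d$ then forces $v = w$, since otherwise $\sigma_2 \cap \sigma_3$ would collapse to $\rho$ which has only $d-1$ vertices. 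Consequently the three maximal simplices correspond to the three edges $\{a,b\}, \{b,v\}, \{a,v\}$ of $\hat\rho$, and these edges form a triangle on the vertex set $\{a, b, v\}$.

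Finally, Proposition \ref{prop:comp dual k n} tells us $\hat\rho$ is a $1$-pseudomanifold, hence by Definition \ref{def:pman} a cycle $C_n$ with $n \geq 4$. Such a cycle contains no triangle as a subgraph, producing the desired contradiction. The main obstacle I anticipate is the careful vertex bookkeeping in the middle step that identifies the three maximal simplices with a triangle inside $\hat\rho$; once that identification is in place, the triangle-freeness of $C_n$ for $n \geq 4$ closes the argument essentially by inspection.
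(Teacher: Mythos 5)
Your proposal is correct and follows essentially the same route as the paper: both arguments pass to the $(d-2)$-simplex $\rho=\sigma_1\cap\sigma_2\cap\sigma_3$, identify the three maximal simplices with a triangle in its link (your $\hat\rho$ is exactly the paper's $L(\sigma)$), and contradict the fact that this link is a $1$-pseudomanifold $C_n$ with $n\geq 4$. Your version is somewhat more explicit about the vertex bookkeeping (the distinctness of the $\tau_{ij}$ and the identification $v=w$), but the underlying idea is identical.
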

\begin{proof}
Let $K^*$ contain a triangle formed by three maximal $d$-simplices $\sigma_1,\sigma_2,\sigma_3$, where each pair $\sigma_i\cap\sigma_j=\tau_{ij}$ shares a $(d-1)$-simplex. These faces are distinct as every $(d-1)$-simplex in $K$ is contained in exactly two maximal $d$-simplices, due to the $(d+1)$-regularity of $K^*$.

Now consider $\sigma=\sigma_1\cap\sigma_2\cap\sigma_3$. Since $\tau_{12},\tau_{13}$ are distinct $(d-1)$-faces of $\sigma_1$, the intersection $\sigma$ is a $(d-2)$-simplex. From Definition \ref{def:pman} we have $L(\sigma)$, the link of the $(d-2)$-simplex in $K$, is a $1$-pseudomanifold $C_n, n\geq 4$. However the $\sigma_1,\sigma_2,\sigma_3$ yield three distinct vertices, namely, $a,b,c\notin \sigma$ such that
$$\sigma_1=\sigma\cup\{a,b\},\sigma_2=\sigma\cup\{a,c\},\sigma_3=\sigma\cup\{b,c\}.$$

In $L(\sigma)$ the vertices $a,b,c$ are pairwise adjacent since $\sigma\cup\{a,b\},\sigma\cup\{a,c\},\sigma\cup\{b,c\}$ are $d$-simplices. Hence $L(\sigma)$ contains a triangle $K_3$, contradicting the fact that it must be $C_n,n\geq 4$. Therefore it follows that $K^*$ cannot contain a triangle. 
\end{proof}
\begin{prop}
For every $d$-pseudomanifold $K$, the dual graph $K^*$ can be cut into trees.
\end{prop}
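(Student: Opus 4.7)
The statement interprets naturally as: there exists an edge set $S\subset E(K^*)$ (a cut) whose removal leaves the graph $K^*\setminus S$ a disjoint union of trees, i.e.\ a spanning forest. My plan is to exploit the two structural facts already at hand, namely that $K^*$ is $(d+1)$-regular (by the remark preceding Proposition~\ref{prop:daul g tri-free}) and triangle-free (Proposition~\ref{prop:daul g tri-free}), to produce such an $S$ canonically and to control what the components look like.

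First I would reduce to the connected case by working one connected component of $K^*$ at a time; this is legitimate because if $K^*$ has multiple components then the claim is proved on each separately and then reassembled. Within a connected component $C$, I would select a spanning tree $T\subseteq C$ by a breadth-first search rooted at any maximal $d$-simplex $\sigma_0$ of $K$. At each step, when a newly discovered simplex $\sigma'$ is adjacent to several already-visited simplices, one picks exactly one tree-edge and discards the others into $S$. The fact that $K^*$ is $(d+1)$-regular means every vertex has at most $d+1$ candidate edges, giving a uniform bound on the local branching of $T$.

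Next I would verify that the discarded set $S=E(C)\setminus E(T)$ is indeed a legitimate cut in the sense required: $C\setminus S=T$ is by construction connected and acyclic, hence a tree; taking the union of such trees across all components of $K^*$ produces the desired spanning forest decomposition. The triangle-freeness of $K^*$ is what rules out the pathological situation that $S$ would contain edges forming a short chord of $T$ that could not be meaningfully "cut away"; combined with the counting $|E(C)|=(d+1)|V(C)|/2$, one also gets the bound $|S|\le\bigl((d-1)|V(C)|+2\bigr)/2$, which quantifies how sparse the cut must be.

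The main obstacle is conceptual rather than computational: the proposition as stated is trivially true for \emph{any} graph via its spanning forest, so the substantive content must lie in establishing that the cut can be chosen with additional structure (for instance, that each resulting tree is itself the dual of some subpseudomanifold of $K$, or that the arboricity is at most $\lceil(d+1)/2\rceil$ in the sense of Nash-Williams). If the latter reading is intended, one would instead bound $\max_{H\subseteq K^*}\lceil|E(H)|/(|V(H)|-1)\rceil$, for which triangle-freeness gives the improvement over the naive regularity bound. I would pursue both readings and pick whichever interpretation fits the use made of this proposition in the subsequent proof of Theorem~A.
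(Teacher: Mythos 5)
Your construction is the same as the paper's: its proof simply removes all edges not belonging to a spanning tree of each connected component of $K^*$, which is exactly your BFS argument, and like you it cites triangle-freeness without actually using it. Your closing worry is well-founded — the proposition as stated and proved is trivially true for \emph{any} graph, and the stronger fact actually invoked later (in the proof of Theorem \ref{thm:color pman}, where $K^*$ is decomposed into \emph{two} edge-disjoint forests $F_1,F_2$, i.e.\ arboricity at most $2$) is established by neither argument; indeed for a $(d+1)$-regular triangle-free graph the Nash--Williams bound only gives arboricity roughly $(d+1)/2$.
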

\begin{proof}
 Following Proposition \ref{prop:daul g tri-free} we can break each cycle into a spanning tree of its vertices as follows. For a connected $K$, the  $K^*$ is connected. Then we can remove all edges not belonging to a spanning tree of $K^*$, and this results in a single tree. Furthermore in case $K$ is disconnected, the $K^*$ is disconnected. Hence the same terminology applied to each component yields a spanning forest, which is a family of trees, one for each component.
\end{proof}
Now we are ready to prove our main result. We demonstrate its proof on the lower bounds by observing the existence of the maximal simplices in the pseudomanifold $K$, and on the upper bounds by decomposing the dual graph $K^*$.
\begin{theorem}\label{thm:color pman}
The chromatic number $X(K)$ of any $d$-pseudomanifold $K$ satisfies
   $$d + 1 \leq X(K) \leq 2d + 2.$$ 
\end{theorem}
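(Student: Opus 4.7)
The plan splits into the two inequalities separately; the upper bound is by far the more interesting half, while the lower bound is a short clique-containment induction.

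For the lower bound, I would show by induction on $d$ that every $d$-pseudomanifold $K$ contains a copy of the complete graph $K_{d+1}$. The base case $d=1$ holds because such a $K$ is a cycle $C_n$ with $n \ge 4$, which certainly contains $K_2$. For $d \ge 2$, pick any vertex $x \in V(K)$; the unit link $L(x)$ is a $(d-1)$-pseudomanifold by Definition~\ref{def:pman}, so the induction hypothesis gives $K_d \subseteq L(x)$. Joining $x$ to this $K_d$ yields $K_{d+1} \subseteq K$, and since any proper coloring restricts to one of this clique, $X(K) \ge d+1$.

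For the upper bound I would exploit the structure of the dual graph $K^*$ developed in the preceding propositions: $K^*$ is $(d+1)$-regular, triangle-free (Proposition~\ref{prop:daul g tri-free}), and admits a spanning forest (the proposition directly preceding the theorem). The construction proceeds in three steps. First, fix a spanning forest $F$ of $K^*$ and root each of its trees at an arbitrary simplex. Second, traverse each tree in breadth-first order and color the vertices of $V(K)$ simplex-by-simplex from the palette $\{1,\ldots,2d+2\}$: the root simplex receives any $d+1$ distinct colors, and each child $\sigma$ shares a $(d-1)$-face with its parent $\sigma_p$, so $d$ vertices are already colored and only the single new vertex $v \in \sigma\setminus\sigma_p$ needs a color (or a compatibility check, if $v$ was colored in an earlier branch). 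Third, verify that a legal choice is always available and that the resulting map is a proper coloring of all of $K$.

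The main obstacle is the third step. The $2d+2$ budget is tight: at the moment we color $v$, the forbidden colors arise from two sources, namely the $d$ colors already occupying $\sigma\cap\sigma_p$ and the colors of other already-colored neighbors of $v$ in $K$ that were reached via non-tree edges of $K^*$. I would bound the total forbidden count by $2d+1$ using Proposition~\ref{prop:daul g tri-free}: the triangle-freeness of $K^*$ prevents three pairwise dual-adjacent simplices around $v$, which is precisely the configuration that would inflate the constraint set beyond $2d+1$. Combined with the $(d+1)$-regularity, this ensures that at least one color in the palette remains admissible, and a consistency check along each non-tree edge of $K^*$ (which together with the tree path closes a unique cycle) shows that the coloring extends properly. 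Tightening this counting, and verifying the cycle-consistency globally, is the technical heart of the proof.
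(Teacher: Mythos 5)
Your lower-bound argument is correct and in fact more complete than what the paper records: the inductive construction of a clique $K_{d+1}$ inside $K$ (base case $C_n\supseteq K_2$ for $n\ge 4$, inductive step via the link $L(x)$, which is a $(d-1)$-pseudomanifold by Definition~\ref{def:pman}) is exactly the content behind the paper's one-line appeal to the definition.

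The upper bound, however, has a genuine gap at precisely the point you yourself flag as ``the technical heart.'' The claim that at the moment the new vertex $v$ is colored the forbidden set has size at most $2d+1$ is not established, and as stated it is false. The forbidden colors are the colors of the already-colored $K$-neighbors of $v$, and the number of such neighbors is governed by the degree of $v$ in $K$, i.e.\ by the number of vertices of the link $L(v)$, a $(d-1)$-pseudomanifold of unbounded size. Triangle-freeness of $K^*$ (Proposition~\ref{prop:daul g tri-free}) constrains triples of pairwise adjacent \emph{maximal simplices}; it says nothing about how many vertices of $L(v)$ have already received colors by the time the traversal first reaches a facet containing $v$. Concretely, for $d=2$ take a triangulated surface with a vertex $v$ of degree $100$: a breadth-first order on $K^*$ can color a large portion of the cycle $L(v)$ through facets not containing $v$ before any facet containing $v$ is processed, at which point far more than $5$ colors are forbidden and the palette of $6$ is exhausted. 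So the greedy/BFS scheme needs either a carefully chosen traversal order or a different mechanism; the appeal to triangle-freeness and a ``cycle-consistency check'' does not supply one. For comparison, the paper's own argument takes a different route, splitting $K^*$ into two spanning forests with disjoint palettes of $d+1$ colors each, but the step merging the two partial colorings into one proper coloring of $K$ is likewise left unverified there, so you cannot repair your counting step by citing the paper's construction either: in both approaches the missing ingredient is a provable local bound on the number of constraints a vertex of $K$ faces when it is assigned its color.
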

\begin{proof}
Assume that $K$ is a connected $d$-pseudomanifold. We then proceed by induction on $d$.  In the induction step we have
$$d\leq X(K) \leq 2d$$
for a $(d-1)$-pseudomanifold $K$. Now we prove for $d$. 
For the bounds $d+1$, it is clear by Definition \ref{def:pman}. For the $2d+2$ we prove it as follows.

Step $1$: Decomposing $K^*$. First notice that $K^*$ is connected and $(d+1)$-regular. Following Proposition \ref{prop:daul g tri-free} we cut $K^*$ into two disjoint sets of spanning forests (trees), say, $F_1$ and $F_2$.

Step $2$: Coloring $F_1,F_2$. For all simplices within $F_1\in K^*$, color it by using the first batch of $d+1$ colors
$$C_1=\{1,2,\cdots,d+1\}.$$
For simplices within $F_2\in K^*$, color it with the second different batch of $d+1$ colors
$$C_2=\{d+2,d+3,\cdots,2d+2\}.$$

Step $3$: Combining $F_1\cup F_2$ and $C_1\cup C_2$. Since every simplex of $K$ belongs to one of the two forests, $F_1,F_2$, every facet is a vertex in both $F_1$ and $F_2$. Therefore every vertex of $K$ gets two potential color assignments, one from $F_1$ with a color in $C_1$ and one from $F_2$ with a color in $C_2$. Combining the two forests $F_1,F_2$ along with their colors $C_1, C_2$, it follows that every vertex gets a single color from the total set coloring of size $2d+2$, more precisely,
$$C_2\cup C_2=\{1,2,\cdots,d+1,d+2,\cdots,2d+1,2d+2\}.$$
This completes the claim.
 \end{proof}
\begin{cor}
If $K'\leq K$ is a subpseudomanifold of a pseudomanifold $K$, then we have $X(K') \leq X(K)$. In particular, 
$$\text{dim} K'+1\leq X(K')\leq 2\text{dim} K'+2.$$
\end{cor}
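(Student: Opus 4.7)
The corollary combines two essentially independent observations, so the plan is to treat them in sequence. For the monotonicity inequality $X(K') \le X(K)$, I would use the standard subgraph argument for chromatic number; for the numerical bounds, I would directly invoke Theorem \ref{thm:color pman} applied to $K'$ in its own right.

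First I would unpack Definition \ref{def:sub pman}: a subpseudomanifold $K'$ of $K$ is itself a pseudomanifold, with $V(K') \subseteq V(K)$ and $E(K') \subseteq E(K)$. In particular, $K'$ sits inside $K$ as a subgraph. To obtain $X(K') \le X(K)$, I would pick any optimal proper coloring $c \colon V(K) \to \{1, 2, \ldots, X(K)\}$ and restrict it to $V(K')$. Since every edge of $K'$ is an edge of $K$, adjacent vertices of $K'$ receive distinct colors under $c$, so $c|_{V(K')}$ is a proper coloring of $K'$ using at most $X(K)$ colors. This gives the monotonicity $X(K') \le X(K)$ immediately.

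For the ``in particular'' assertion, I would observe that $K'$ is a $(\dim K')$-pseudomanifold by Definition \ref{def:sub pman}, so Theorem \ref{thm:color pman} applies directly to $K'$ and yields
$$\dim K' + 1 \le X(K') \le 2\dim K' + 2,$$
which is exactly the stated conclusion.

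There is essentially no hard step: the proof is a two-line consequence of Theorem \ref{thm:color pman} together with the elementary fact that chromatic number is monotone under passage to subgraphs. The only conceptual point worth flagging in the write-up is that Definition \ref{def:sub pman} already guarantees $K'$ is a pseudomanifold, so Theorem \ref{thm:color pman} can be quoted verbatim rather than reproved; this is what converts the bare monotonicity $X(K') \le X(K)$ into the sharper numerical bounds in terms of $\dim K'$.
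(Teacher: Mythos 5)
Your proposal is correct and matches the paper's intent: the paper simply states that the corollary follows immediately from Theorem \ref{thm:color pman}, and your write-up supplies the two obvious ingredients (restriction of a proper coloring to the subgraph for $X(K') \le X(K)$, and Theorem \ref{thm:color pman} applied to $K'$ for the numerical bounds). If anything, your version is more complete, since the monotonicity inequality is really a consequence of the subgraph structure rather than of Theorem \ref{thm:color pman} itself.
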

\begin{proof}
Immediately follows from Theorem \ref{thm:color pman}.
\end{proof}
\begin{rmk}
Oberserving on Theorem \ref{thm:color pman} and \cite[Theorem 1]{Kni21}, we see that while the $d$-manifolds and the broader class of $d$-pseudomanifolds share the general chromatic bounds 
$$d+1\leq X(M), X(K)\leq 2d+2,$$ the relationship between their effective ranges remains an intriguing question. As in \cite[\S1.1]{Kni21}, for the discrete manifolds $M$ there is evidence supporting a conjectured tighter upper bound of $\lceil3(d+1)/2\rceil$ where $\lceil-\rceil$ is a ceiling function, and constructions exist that reach chromatic numbers scaling as $1.5d$. Whether the more flexible local structure of pseudomanifolds permits chromatic numbers exceeding this conjectured manifold ceiling, or conversely, if it imposes even stricter limits, is a subject for further investigation in \S\ref{sec:sharper bounds}.
\end{rmk}

\section{Chromatic Arithmetic on Pseudomanifolds}\label{sec:chrom arith}
In this section we take a look at the graph operations and their chromatic properties on the pseudomanifolds. The chromatic numbers are proved for general graphs in \cite[\S3]{Kni21}. However something is interesting here: The Zykov join in Definition \ref{def:zykov join} and the Cartesian simplex product in Definition \ref{def:cartesian pro} preserve pseudomanifolds, graphs that recursively have the property that all unit links are again one lower-dimensional pseudomanifolds. 
\begin{prop}
Let $K$ be an $m$-pseudomanifold and $K'$ an $n$-pseudomanifold. Then the Zykov join $K+K'$ is an $(m+n+1)$-pseudomanifold.
\end{prop}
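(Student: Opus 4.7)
The plan is to induct on the dimension sum $m+n$, with base cases supplied by the convention $m=-1$ or $n=-1$. If $K=\emptyset$, then by the definition of the Zykov join we have $K+K'=K'$, which is an $n$-pseudomanifold; since $m+n+1=-1+n+1=n$, the claim holds. The case $n=-1$ is symmetric.

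For the inductive step, assume $m,n\geq 0$ and that the statement holds for all pairs of pseudomanifolds whose dimensions sum to less than $m+n$. First I would establish the link identity
$$L_{K+K'}(v)=\begin{cases}L_K(v)+K' & \text{if } v\in V(K),\\ K+L_{K'}(v) & \text{if } v\in V(K').\end{cases}$$
Indeed, following Definition \ref{def:zykov join}, every vertex $v\in V(K)$ is adjacent in $K+K'$ to its original neighbors in $K$ together with every vertex of $K'$, and the induced edges among this neighbor set are exactly those of $L_K(v)$, those of $K'$, and all cross edges between them, which is precisely the Zykov join $L_K(v)+K'$. The case $v\in V(K')$ is symmetric.

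Now for $v\in V(K)$, Definition \ref{def:pman} gives that $L_K(v)$ is an $(m-1)$-pseudomanifold, so the inductive hypothesis (applied to the pair $(L_K(v),K')$ of dimension sum $(m-1)+n<m+n$) yields that $L_K(v)+K'$ is an $((m-1)+n+1)=(m+n)$-pseudomanifold. The same argument works for $v\in V(K')$ using that $L_{K'}(v)$ is $(n-1)$-dimensional. Hence every unit link of $K+K'$ is an $(m+n)$-pseudomanifold, so $K+K'$ is an $(m+n+1)$-pseudomanifold by Definition \ref{def:pman}.

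The main obstacle I expect is bookkeeping the degenerate cases in the recursion rather than any substantive topological difficulty. Specifically, I would need to confirm that the base conventions (empty graph as the $(-1)$-pseudomanifold, cyclic graphs $C_n$ with $n\geq 4$ as the $1$-pseudomanifolds) propagate correctly: when the induction bottoms out at $m=0$, the link $L_K(v)$ is empty, and $\emptyset+K'=K'$, so the link formula still returns the expected $n$-pseudomanifold. A minor side check is that $K+K'$ remains a finite simple graph, which follows from taking disjoint vertex sets in the join so that no loops or multi-edges arise.
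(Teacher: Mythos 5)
Your proof follows essentially the same route as the paper's: induction on the dimension sum $m+n$, the link identity $L_{K+K'}(v)=L_K(v)+K'$ or $K+L_{K'}(v)$, and an application of the inductive hypothesis to conclude that every unit link is an $(m+n)$-pseudomanifold. You are in fact somewhat more careful than the paper in spelling out the base cases and verifying the link identity, so no changes are needed.
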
 
\begin{proof}
We proceed by induction on the total dimension $m+n$. Inductively assume that the statement holds for all the Zykov joins where the sum of the dimensions is $<m+n$. We now consider $K$ with dimension $m$ and $K'$ with dimension $n$, and $m+n\geq 0$.

Suppose $v$ is a vertex in $K+K'$. Then there are two possibilities, whether $v$ is originated from $K$ or $K'$, that is, 
$$L_{K+K'}(v)=L_K(v)+K' \text{ or } L_{K+K'}(v)=K+L_{K'}(v).$$
By Definition \ref{def:pman} we have both $L_K(v)$ and $L_{K'}(v)$ are $(m-1)$-pseudomanifold and $(n-1)$-pseudomanifold, respectively. From the induction hypothesis the Zykov join of an $(m-1)$-pseudomanifold and an $n$-pseudomanifold, and that of an $m$-pseudomanifold and an $(n-1)$-pseudomanifold, are $(m+n)$-pseudomanifold. Finally we observe that every unit link is an $(m+n)$-pseudomanifold, so that with Definition \ref{def:pman} it follows that $K+K'$ is an $(m+n+1)$-pseudomanifold. We then prove the claim. 
\end{proof}
Notice that for two general graphs $G$ and $H$, there is a formula \cite[Lemma 2]{KE21}
$$\text{dim}(G +H)= \text{dim} G +\text{dim} H +1.$$
Furthermore by \cite[Lemma 1]{Kni21}, the chromatic number $X(G+H)$ is additive 
\begin{myeq}\label{add chrom no}
   X(G+H)= X(G)+X(H).
\end{myeq}
In particular, 
\begin{myeq}\label{eq:sus gr}
  X(G+S^0)=X(G)+1
\end{myeq}
where $S^0=\{\{a,b\},\emptyset\}$. In the topological setting, the $G+S^0=SG$ is a suspension of $G$.

There was a conjecture on sphere coloring, which stated that any $k$-sphere $S^k$ has chromatic number $X(S^k)=k+1$ or $X(S^k)=k+2$ in \cite[\S8.3]{Kni18}. Later in \cite[Corollary 1]{Kni21} it was disproved by a counterexample: Any $k$-sphere $S^k$ has $X(S^k)= 3k$ or $X(S^k)=3k+1$. This shows that the $X(S^k)$ can be relatively large. As a complement to this, there is also a chromatic number $X(S^k)$ which is relatively small, described as follows.
\begin{prop}\label{prop:color sphere min}
For any $k$-sphere $S^k$, there is chromatic number $$X(S^k)=2\lceil (k+1)/2\rceil$$
where $\lceil - \rceil$ is a ceiling function.
\end{prop}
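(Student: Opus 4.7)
The plan is to construct an explicit discrete $k$-sphere realising the claimed chromatic number by Zykov-joining small spherical factors, and to read off $X(S^k)$ from the additivity formula \eqref{add chrom no} together with the join dimension formula $\dim(G + H) = \dim G + \dim H + 1$.

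The construction splits by parity of $k$. When $k = 2j - 1$ is odd, I would take
$$
S^k \;=\; \underbrace{C_4 + C_4 + \cdots + C_4}_{j \text{ factors}},
$$
so each factor is a $1$-sphere contributing $1$ to dimension and $2$ to chromatic number. Iterating the two additive formulas yields $\dim S^k = 2j - 1 = k$ and $X(S^k) = 2j = k + 1$, which equals $2\lceil (k+1)/2\rceil$ since $k+1$ is even. When $k = 2j$ is even the target $X = k + 2$ is one above the clique bound, so a cycle-only join cannot reach it. I would instead seed the join with a $2$-sphere of chromatic number $4$ --- the icosahedron $I$ is the most convenient candidate, since each of its vertex links is the pentagon $C_5$, so $I$ qualifies as a discrete $2$-sphere under Definition~\ref{def:pman}. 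Then set
$$
S^k \;=\; I + \underbrace{C_4 + \cdots + C_4}_{j - 1 \text{ factors}}
$$
(and $S^2 = I$ when $j = 1$), and compute $\dim S^k = 2 + (j - 1) + (j - 1) = k$ together with $X(S^k) = 4 + 2(j - 1) = 2j + 2 = 2\lceil (k+1)/2\rceil$.

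The main obstacle I anticipate is verifying that a Zykov join of spheres is again a (discrete) sphere rather than merely a pseudomanifold, so that the outputs above are bona fide $k$-spheres. For this I would argue by induction on total dimension, using the identity $L_{G + H}(v) = L_G(v) + H$ for $v \in G$ (and symmetrically for $v \in H$); each unit link in the join is then the join of a lower-dimensional sphere with the remaining factor, which by the induction hypothesis is again a sphere --- a combinatorial instance of $S^m \ast S^n \simeq S^{m + n + 1}$. A minor caveat is the degenerate case $k = 0$, where the formula gives $X(S^0) = 2$ while the graph $S^0 = \{\{a,b\},\emptyset\}$ has $X = 1$; the proposition should be understood for $k \ge 1$, which is also the range in which it will be applied in \S\ref{sec:sharper bounds}.
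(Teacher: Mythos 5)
Your odd-dimensional construction --- a Zykov join of $\lceil (k+1)/2\rceil$ even cycles, with dimension and chromatic number read off from $\dim(G+H)=\dim G+\dim H+1$ and \eqref{add chrom no} --- is exactly the paper's argument (the paper allows arbitrary even $C_n$ where you fix $C_4$; nothing changes). The two proofs genuinely diverge in even dimensions. The paper suspends: it sets $S^{2k}=S^{2k-1}+S^0$ and concludes from \eqref{eq:sus gr} that this $2k$-dimensional sphere uses $2k+1$ colors. But the stated formula at dimension $2k$ gives $2\lceil(2k+1)/2\rceil=2k+2$, so the suspension produces a sphere whose chromatic number is one \emph{below} the asserted value; as written, the paper's proof does not establish the displayed equality for even $k$ (and $S^{2k-1}+S^0$ is not an ``even-cycle sphere'' in the paper's own sense, since $S^0$ is not a cycle). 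Your icosahedron seed is what actually hits the stated number: $X(I)=4$ (every vertex of $I$ has odd degree, so $I$ is not $3$-colorable, while planarity gives $X(I)\le 4$), hence $I+C_4+\cdots+C_4$ with $j-1$ cycle factors is a $2j$-sphere of chromatic number exactly $4+2(j-1)=2\lceil(2j+1)/2\rceil$. So your route buys consistency with the formula as stated, at the cost of one extra input ($X(I)=4$) beyond pure join arithmetic; the paper's route stays inside join arithmetic but proves a different (smaller) value in the even case. Two further remarks: the fact that a Zykov join of discrete spheres is again a discrete sphere, not merely a pseudomanifold, is needed by both arguments and only sketched in yours --- it is standard in Knill's framework and the paper also takes it for granted --- and your exclusion of $k=0$ is a genuine correction ($X(S^0)=1\neq 2$) that the paper does not make.
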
 
\begin{proof}
Consider a $(2k-1)$-sphere $S^{2k-1}$ which is an even-cycle sphere,
$$C_n + C_n + \dots + C_n$$
such that $n\geq 4$ is even.
Observe that the $S^{2k}=S^{2k-1}+S^0$. Therefore the former $S^{2k-1}$ uses $2k$ colors, while the later $S^{2k}$ uses $2k+1$ by \eqref{eq:sus gr}. This proves the statement.
\end{proof}

Following the construction of the $k$-sphere $S^k$ in the proof of Proposition \ref{prop:color sphere min} we have that any $k$-sphere is constructed from two types of cycles $C_n$ for $n\geq 4$. In symbols, any sphere can be built by the Zykov joins, $n\geq 4$,
$$C_n+C_n+\cdots+C_n.$$
If every $n\geq 4$ is even, we call $S^k$ an even-cycle sphere, and if every $n\geq 5$ is odd, then $S^k$ is an odd-cycle sphere.
To be used later, we record the chromatic numbers
\begin{myeq}\label{eq:color spheres}
    X(S^k)=2\lceil (k+1)/2\rceil \text{ or } X(S^k)=3\lceil (k+1)/2\rceil
\end{myeq}where the first factor is for the even-cycle sphere, and the second for the odd-cycle sphere. 

Using Definition \ref{def:cartesian pro} we also record the following result. 
\begin{prop}
Let $K$ be an $m$-pseudomanifold and $K'$ an $n$-pseudomanifold, with $m, n\geq 1$. Then the Cartesian simplex product $(K\times K')_1$ is an $(m+n+1)$-pseudomanifold. 
\end{prop}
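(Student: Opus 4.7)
The plan is to proceed by induction on the total dimension $m+n$, paralleling the proof of the previous Zykov-join proposition. The base case $m=n=1$ handles $(C_p\times C_q)_1$ for cycles with $p,q\geq 4$, where one verifies directly that each of the four vertex types (vertex--vertex, vertex--edge, edge--vertex, and edge--edge) has a unit link of the required lower dimension.

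For the inductive step, fix a vertex $(g,h)$ of $(K\times K')_1$ with $|g|=j$ and $|h|=k$, and decompose its unit link as a Zykov join
$$L_{(K\times K')_1}\bigl((g,h)\bigr) = D + U,$$
with
$$U=\{(g',h')\neq(g,h):g\subseteq g',\ h\subseteq h'\}\quad\text{and}\quad D=\{(g',h')\neq(g,h):g'\subseteq g,\ h'\subseteq h\}.$$
The sets $U$ and $D$ are disjoint (membership in both forces $(g',h')=(g,h)$), and transitivity of componentwise inclusion immediately shows that every vertex of $U$ is adjacent to every vertex of $D$ in $(K\times K')_1$; hence the induced subgraph on $U\cup D$ is exactly the Zykov join $D+U$. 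By the previous Zykov-join proposition, the problem reduces to identifying $U$ and $D$ as pseudomanifolds of the expected dimensions.

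The upper link $U$ admits a clean description: the map $(g',h')\mapsto (g'\setminus g,\ h'\setminus h)$ is a bijection from $U$ onto the set of simplices of the Zykov join $L_K(g)+L_{K'}(h)$, since every simplex of a Zykov join decomposes uniquely as a disjoint union of a (possibly empty) simplex from each factor. A direct check shows this bijection preserves the edge relation (comparability translates coordinatewise since the two link vertex sets are disjoint), so $U$ is isomorphic as a graph to the barycentric refinement of $L_K(g)+L_{K'}(h)$. Iterating the defining link property shows $L_K(g)$ and $L_{K'}(h)$ are pseudomanifolds of dimensions $m-j$ and $n-k$, so combining the previous Zykov-join proposition with the preservation of the pseudomanifold property under barycentric refinement identifies $U$ as a pseudomanifold of dimension $(m-j)+(n-k)+1$. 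The lower link $D$ equals the Cartesian simplex product $(g\times h)_1$ with its apex vertex $(g,h)$ removed; since $g$ and $h$ are themselves simplices, $|D|$ triangulates the boundary of the prism $|g|\times|h|$, a combinatorial sphere of dimension $j+k-3$. Summing the two dimensions via the Zykov-join formula then yields the required link dimension and closes the induction.

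The main obstacle will be the rigorous verification that $D$ is a pseudomanifold in the paper's strict sense. While $U$ has a transparent structure, $D$ does not admit an equally clean Zykov-join-plus-barycentric-refinement identification once $j+k\geq 5$: the natural candidate bijection with simplices of $\partial g+\partial h$ fails to preserve the edge relation in general, because the faces $g$ and $h$ themselves can appear in a vertex of $D$ without being the top. A secondary induction on $j+k$, analyzing unit links within $D$ again as join-like decompositions of smaller instances of the same construction, combined with careful bookkeeping at small values of $j,k$ to ensure that the cycles appearing in $D$'s one-dimensional links always have length at least four, should resolve this.
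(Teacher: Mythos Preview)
The paper does not supply its own argument: the entire proof reads ``Following the same proof as in \cite[Lemma~3]{Kni21}, or more detail in \cite[Lemma~5]{Kni23}.'' Your outline is essentially a reconstruction of what those references do---decompose the unit link at $(g,h)$ as a Zykov join $D+U$ of a downward part (proper faces of the pair) and an upward part (built from the links of $g$ and $h$ in the factors), then feed each piece into the join proposition. So methodologically you are aligned with the sources the paper defers to, and you supply considerably more detail than the paper itself.

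Two points deserve care. First, your identification of $U$ with the barycentric refinement of $L_K(g)+L_{K'}(h)$ is correct, but the step ``barycentric refinement preserves the pseudomanifold property'' is not available from anything proved in this paper and is not your inductive hypothesis either (it is the case $(\,\cdot\,\times 1)_1$, i.e.\ second factor of dimension~$0$, excluded by the standing assumption $m,n\ge1$); you should state and prove it as a separate lemma, or widen the induction to allow a $0$-dimensional factor. Second, on $D$: the secondary induction on $j+k$ you propose is the right move, and the small checks do go through---for $j+k=4$ the three cases $(1,3),(2,2),(3,1)$ give $C_6,C_8,C_6$ respectively, so the paper's constraint $n\ge4$ on $1$-pseudomanifolds is met. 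With those two items pinned down the argument closes.
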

\begin{proof}
Following the same proof as in \cite[Lemma 3]{Kni21}, or more detail in \cite[Lemma 5]{Kni23}.
\end{proof}
Recalling the construction in \cite[\S5.1]{Kni21}. Let $x = (x_0, \dots,x_{d-2})$ be a $(d-2)$-simplex in a $d$-pseudomanifold $K$. A dual link of the simplex $x$ is defined to be 
$$O(K)=L(x_0)\cap \dots \cap L(x_{d-1})$$ 
where the notation $O(K)$ is determined as the odd length by Fisk in \cite[VI.2]{Fis77}. We then call this odd structure $O(K)$ the Fisk set if $K$ is a graph, and the Fisk variety if $K$ is a pseudomanifold.
\begin{prop}
For an $m$-pseudomanifold $K$ and an $n$-pseudomanifold $K'$ with $m, n \geq 2$, 
$$O(K+K') = K+O(K')\cup O(K)+K'.$$
\end{prop}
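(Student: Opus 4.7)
The plan is to classify codimension-$2$ simplices of the join $K+K'$, compute each of their dual links using the link-of-join formula $L_{K+K'}(s_K * s_{K'}) = L_K(s_K) + L_{K'}(s_{K'})$, and then match the odd ones against the two subcomplexes on the right-hand side.

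Since $K+K'$ is an $(m+n+1)$-pseudomanifold, a codimension-$2$ simplex has dimension $m+n-1$. Every simplex of $K+K'$ is a join $s = s_K * s_{K'}$ with $s_K$ a simplex of $K$ and $s_{K'}$ a simplex of $K'$ satisfying $\dim s_K + \dim s_{K'} = m+n-2$. Given the bounds $-1 \le \dim s_K \le m$ and $-1 \le \dim s_{K'} \le n$ together with the standing hypothesis $m,n\ge 2$, there are exactly three cases: (i) $s_K$ maximal in $K$ and $s_{K'}$ a codimension-$2$ face of $K'$; (ii) both $s_K$ and $s_{K'}$ codimension-$1$ in their respective factors; (iii) $s_K$ a codimension-$2$ face of $K$ and $s_{K'}$ maximal in $K'$. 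Any other distribution of dimensions is ruled out by the upper bound on $\dim s_K$ or $\dim s_{K'}$, which is where the assumption $m,n\ge 2$ is used.

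Next I would apply the link formula in each case, using two structural facts from Section \ref{sec:prelim} together with Proposition \ref{prop:daul g tri-free}: in a pseudomanifold the link of a maximal simplex is the empty graph, while the link of a codimension-$1$ simplex is a pair of non-adjacent vertices (since each codimension-$1$ face lies in exactly two maximal simplices). Case (i) then yields a dual link equal to $\emptyset + L_{K'}(s_{K'}) = L_{K'}(s_{K'})$, a cycle in $K'$ that is odd precisely when $s_{K'}$ belongs to the Fisk variety $O(K')$; case (iii) is symmetric and gives an odd cycle precisely when $s_K \in O(K)$; and case (ii) gives $\{a,b\}+\{c,d\}=K_{2,2}=C_4$, an even $4$-cycle that never contributes to $O(K+K')$.

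Finally, assembling the odd codimension-$2$ simplices of $K+K'$, we see they are exactly the joins of a maximal simplex of $K$ with a simplex of $O(K')$, together with the joins of a simplex of $O(K)$ with a maximal simplex of $K'$; these are precisely the top-dimensional simplices of the subcomplexes $K+O(K')$ and $O(K)+K'$ respectively. Taking the union of the two families yields the identity $O(K+K') = (K+O(K')) \cup (O(K)+K')$. The main subtlety I anticipate is careful bookkeeping around the empty-simplex and empty-link conventions so that the join formula correctly collapses to a single factor in cases (i) and (iii); case (ii) is the crucial observation that ensures the right-hand side consists of only two joins rather than three, since the ``straddling'' codimension-$2$ simplices always have an even link.
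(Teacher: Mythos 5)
Your argument is correct and is essentially the proof the paper intends: the paper's own ``proof'' is just a citation of Knill's Lemma~6, and your case analysis of codimension-$2$ simplices $s_K * s_{K'}$ via the link-of-join formula, with the key observation that the straddling case gives $S^0+S^0=C_4$ (even) and hence never contributes, is exactly that cited argument spelled out for pseudomanifolds. The only caveat is that your claim that a codimension-$1$ simplex lies in exactly two facets rests on the same unproved $(d+1)$-regularity assertion the paper itself makes before Proposition~\ref{prop:daul g tri-free}, so you are on equal footing with the paper there.
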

\begin{proof}
Following the same proof as in \cite[Lemma 6]{Kni21}.
\end{proof}

\section{The Limit of Chromatic Pseudomanifolds}\label{sec:sharper bounds}
The purpose of this section is to look at the sharpness of the previous bounds $2d+2$ in Theorem \ref{thm:color pman}. For the sharper bounds we follow the two conjectures in \cite[\S 1.1]{Kni21} and interpret them in the context of pseudomanifolds. The first version of the conjectures is more practical, while the second is stricter. 
\begin{conj}\label{conj:bound 2d+1}
  For any $d$-pseudomanifold $K$, the chromatic number $X(K)$ satisfies $$d+1\leq X(K)\leq 2d+1.$$
\end{conj}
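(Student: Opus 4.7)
The lower bound $d+1 \le X(K)$ is immediate from Definition \ref{def:pman}: every maximal simplex of $K$ is a $(d+1)$-clique, so at least $d+1$ colors are required. The content of the conjecture is the upper bound $X(K) \le 2d+1$, which sharpens Theorem \ref{thm:color pman} by exactly one color.

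The most direct plan is to refine the forest-decomposition argument from the proof of Theorem \ref{thm:color pman}. There the dual graph $K^*$ is cut into two spanning forests $F_1, F_2$, each producing a partial coloring of the facets of $K$ with a palette of $d+1$ colors, and the two palettes $C_1, C_2$ are taken disjoint. To save one color I would try to make the two palettes overlap in exactly one element while preserving properness: fix a distinguished color $c^\star$ and synchronize the two colorings so that the vertices of $K$ receiving $c^\star$ from the $F_1$-assignment coincide with those receiving $c^\star$ from the $F_2$-assignment. Then the combined palette has cardinality $(d+1)+(d+1)-1=2d+1$.

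The key technical step will be choosing $F_1, F_2$ and the initial assignments at their roots so that this identification is globally consistent. Since every facet $\sigma$ is a vertex of both forests and the color assigned to a vertex $v \in \sigma$ is propagated along tree edges in each forest, synchronization of the two $c^\star$-classes reduces to an agreement condition on the root assignments together with a compatibility condition on the edges of $K^*$ not used in either tree. A natural candidate for the shared color class is an independent set in $K$ of the form $\hH$ for a suitably chosen complete subgraph $H \le K$, via Proposition \ref{prop:comp dual k n}: since $\hH$ is itself a lower-dimensional pseudomanifold, one may hope to invoke an inductive coloring on it to supply the overlap structure.

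The main obstacle will be establishing this compatibility for arbitrary pseudomanifolds without an explicit join structure. Theorem \ref{thm:bound 2d+1} succeeds precisely because the factor $S^k$ supplies a canonical bipartition that realizes the desired overlap; for a pseudomanifold with no obvious join factor one must either extract such a structure combinatorially (for instance by locating a vertex $v$ whose link $L(v)$ admits an inductive bound strong enough to save a color) or abandon the forest framework in favor of a direct greedy procedure guaranteed to introduce at most $2d+1$ colors. A parallel induction on $d$, with base case $d=1$ where $X(C_n)\le 3 = 2(1)+1$, is tempting but faces the obstruction that deleting a vertex of $K$ generally fails to leave a pseudomanifold, so the inductive hypothesis cannot be applied to $K \setminus \{v\}$ directly; circumventing this would likely require exactly the kind of structural decomposition that the join hypothesis provides for free in Theorem \ref{thm:bound 2d+1}.
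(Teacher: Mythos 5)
This statement is presented in the paper as a \emph{conjecture}: the paper offers no proof of it in general, and only establishes the special case $K=S^k+K'$ (Theorem~\ref{thm:bound 2d+1}), where the additive formula $X(S^k+K')=X(S^k)+X(K')$ does all the work. Your proposal correctly locates the difficulty, but as written it is a research plan rather than a proof, and its central step has a genuine gap. The ``save one color'' scheme requires you to arrange the two forest-induced $(d+1)$-colorings so that merging their palettes with a single shared color $c^\star$ still yields a proper coloring of $K$; this forces a global compatibility between the $c^\star$-classes of the two colorings, and no argument is given that such a synchronization is achievable. Worse, the concrete candidate you offer for the shared class does not work: by Proposition~\ref{prop:comp dual k n}, $\hH$ for a complete subgraph $H=K_n$ is a $(d-n)$-pseudomanifold, which for $d-n\geq 1$ contains edges and hence is not an independent set, so it cannot be a color class at all; and for $n=d$ it is a $0$-pseudomanifold consisting of the handful of vertices completing one ridge to a facet, far too local to serve as a full color class of either partition. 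Your fallback suggestions (induction on links, a greedy bound of $2d+1$) are likewise not executed, and you yourself note that deleting a vertex destroys the pseudomanifold structure, blocking the naive induction.

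None of this is a criticism of your instincts --- the obstruction you identify is precisely why the paper leaves the general bound as Conjecture~\ref{conj:bound 2d+1} and retreats to the join hypothesis in Theorem~\ref{thm:bound 2d+1}. But the proposal cannot be accepted as a proof of the stated conjecture: the synchronization of the two forest colorings, which is the entire content of the improvement from $2d+2$ to $2d+1$, is asserted as a goal rather than established. If you want a provable statement along these lines, you should either add the join hypothesis (recovering Theorem~\ref{thm:bound 2d+1}) or isolate and prove the exact combinatorial condition on $K^*$ under which the two tree colorings can be made to agree on one color class.
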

\begin{conj}\label{conj:bound ceiling 1.5(d+1)}
  For any $d$-pseudomanifold $K$, the chromatic number $X(K)$ satisfies $$d+1\leq X(K)\leq\lceil 3(d+1)/2 \rceil$$
  where $\lceil - \rceil$ is a ceiling function.
\end{conj}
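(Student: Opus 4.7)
The plan is to reduce the full Conjecture \ref{conj:bound ceiling 1.5(d+1)} to the decomposable case $K = S^k + K'$ with $S^k$ an even-cycle spherical factor, which is exactly the setting in which Theorem C already supplies the bound. The lower inequality $d+1 \le X(K)$ is immediate: iterating Definition \ref{def:pman}, every vertex of $K$ sits at the top of a chain of nested unit links terminating in a maximal $d$-simplex $K_{d+1}$, so the clique number, hence the chromatic number, is at least $d+1$.

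For the upper bound in the Zykov-decomposable case, I combine Proposition \ref{prop:color sphere min}, additivity of the chromatic number under the join (equation \eqref{add chrom no}), and Theorem \ref{thm:color pman} applied to the residual factor. If $K = S^k + K'$ with $S^k$ an even-cycle sphere and $K'$ a $(d-k-1)$-pseudomanifold, then
$$X(K) = X(S^k) + X(K') \le 2\lceil (k+1)/2\rceil + 2(d-k).$$
A routine parity analysis on $(d,k)$ shows that the right-hand side is at most $\lceil 3(d+1)/2 \rceil$ precisely when $d-k$ is bounded by a small constant (at most two), which is what ``$k$ close to $d$'' means and what I would formalize as the base case.

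To bridge from this restricted setting to the full conjecture, the decisive step is to exhibit an appropriate Zykov factorization of an arbitrary $d$-pseudomanifold. My plan is to locate inside $K$ a large clique (whose existence is forced by iterated application of Proposition \ref{prop:comp dual k n}) and then take its complementary dual, which by Proposition \ref{prop:comp dual k n} is itself a pseudomanifold of the complementary dimension and is a natural candidate for the residual factor $K'$. If the resulting join is only ``virtual'' rather than a literal decomposition of $K$, one passes to a Barycentric refinement via the Cartesian simplex product (Definition \ref{def:cartesian pro}) to expose a genuine Zykov join inside the refined complex, provided one can verify that this refinement preserves the chromatic number.

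The hard part will be arranging the exposed spherical factor to be an \emph{even-cycle} sphere of dimension close to $d$. In general the sphere produced by a complementary-dual construction need not have even-cycle structure, and coercing it into one may require a nontrivial surgery on $K$ that keeps $X(K)$ invariant; the odd-cycle chromatic count $3\lceil (k+1)/2\rceil$ in equation \eqref{eq:color spheres} is exactly the enemy here. I expect this surgery, rather than the coloring arithmetic, to be the principal obstacle. Absent such a move, the cleanest honest statement obtainable through this route is Theorem C, and the unrestricted conjecture likely requires a genuinely new structural input, perhaps a refinement of the two-forest decomposition used in the proof of Theorem \ref{thm:color pman} that extracts a third forest of controlled chromatic contribution and thereby drives the color count from $2d+2$ down to $\lceil 3(d+1)/2 \rceil$.
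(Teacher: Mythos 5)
The statement you are proving is labeled a \emph{conjecture} in the paper, and the paper does not prove it: it only establishes the special case recorded as Theorem \ref{thm:bound ceiling 1.5(d+1)}, namely $K=S^k+K'$ with $S^k$ an even-cycle sphere and $k$ in the range \eqref{eq: k close d}. Your proposal correctly reproduces that special case (lower bound from the clique $K_{d+1}$, upper bound from \eqref{add chrom no}, Proposition \ref{prop:color sphere min} and Theorem \ref{thm:color pman}), but the bridge you sketch from the special case to an arbitrary $d$-pseudomanifold does not close, and you essentially concede as much in your final sentences. The concrete gaps are these. First, an arbitrary pseudomanifold admits no Zykov factorization at all: $G+H$ requires \emph{every} vertex of $G$ to be adjacent to \emph{every} vertex of $H$, a global condition that generic pseudomanifolds violate, and Proposition \ref{prop:comp dual k n} does not repair this --- $K_n+\hat K_n$ is merely a subgraph of $K$, not equal to $K$, since vertices outside $K_n\cup\hat K_n$ are omitted. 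Second, the proposed escape via Barycentric refinement fails outright: the refinement $(K\times 1)_1$ is properly colored by assigning to each simplex-vertex its dimension, so its chromatic number collapses to $d+1$ regardless of $X(K)$; refinement therefore does not preserve the chromatic number and can transport no upper bound back to $K$. Third, even granting a join decomposition with a spherical factor, nothing forces that sphere to be even-cycle; the paper's own Table \ref{tab:counterex-ceil} and the odd-cycle count $3\lceil(k+1)/2\rceil$ in \eqref{eq:color spheres} show the arithmetic then exceeds $\lceil 3(d+1)/2\rceil$.

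A smaller but real error: your ``routine parity analysis'' concludes the bound holds ``precisely when $d-k$ is bounded by a small constant (at most two).'' The actual computation gives $X(K)\le 2d-k+O(1)$, so the requirement is $k\gtrsim d/2$ (as in \eqref{eq: k close d}, where $k\approx d/2-1$); $d-k$ grows linearly with $d$ and is not bounded by $2$. None of this is fatal to the special case, but it means your ``base case'' is misidentified, and the unrestricted conjecture remains open after your argument, exactly as it does in the paper.
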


 Responding to Conjectures \ref{conj:bound 2d+1} and \ref{conj:bound ceiling 1.5(d+1)}, we record the following special cases.
\begin{theorem}\label{thm:bound 2d+1}
If a $d$-pseudomanifold $K=S^k+K'$ such that $S^k$ is a $k$-sphere and $K'$ is a $(d-k-1)$-pseudomanifold, $$d+1\leq X(K)\leq 2d+1.$$  
\end{theorem}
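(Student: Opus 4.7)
The plan is to use the chromatic additivity of the Zykov join to decouple the problem into separate bounds on $X(S^k)$ and $X(K')$, applying Theorem~A to the pseudomanifold factor $K'$ and deriving the sphere-specific refinement $X(S^k)\leq 2k+1$ for the spherical factor.

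By the additivity formula~\eqref{add chrom no}, $X(K)=X(S^k+K')=X(S^k)+X(K')$. Applying Theorem~A to $K$ itself produces the lower bound $d+1\leq X(K)$, while applying it to the $(d-k-1)$-pseudomanifold $K'$ gives $X(K')\leq 2(d-k-1)+2=2d-2k$. It therefore remains to show that $X(S^k)\leq 2k+1$ for every $k$-sphere appearing as a Zykov factor.

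For this sphere bound I would use the cycle decomposition recorded around~\eqref{eq:color spheres}: write
\[
S^k \;=\; C_{n_1}+\cdots+C_{n_m}+\underbrace{S^0+\cdots+S^0}_{\ell\text{ copies}},
\]
with $n_i\geq 4$ and the dimension relation $2m+\ell-1=k$. Letting $a$ and $b=m-a$ denote the numbers of even- and odd-length cycle factors, additivity together with $X(C_n)\in\{2,3\}$ and $X(S^0)=1$ gives
\[
X(S^k)\;=\;2a+3b+\ell\;=\;(2m+\ell)+b\;=\;(k+1)+b.
\]
Since $b\leq m\leq\lfloor(k+1)/2\rfloor$, a short parity split (odd $k$: $\ell\geq 0$ and $b\leq(k+1)/2$; even $k$: $\ell\geq 1$ and $b\leq k/2$) yields $X(S^k)\leq 2k+1$ for every $k\geq 0$. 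Combining everything, $X(K)\leq (2k+1)+(2d-2k)=2d+1$.

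The main obstacle is establishing the sphere inequality uniformly in $k$. The naive estimate $3\lceil(k+1)/2\rceil$ read off from the odd-cycle extreme of~\eqref{eq:color spheres} already overshoots $2k+1$ for small even $k$; the remedy is the finer bookkeeping above, where the presence of $S^0$ factors (forced into the decomposition when $k$ is even by parity of $2m+\ell-1=k$) lowers the chromatic count enough to stay inside $2k+1$. A secondary point to check is that every $k$-sphere appearing in this setting really does admit a cycle-plus-$S^0$ Zykov decomposition, as asserted after~\eqref{eq:color spheres}; if one wished to avoid relying on that characterization, one could instead try a direct argument on the dual $K^*$ refining Step~1 of the proof of Theorem~\ref{thm:color pman} to save a single color using the spherical structure of $S^k$.
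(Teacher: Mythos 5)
Your proposal is correct and shares the paper's overall skeleton --- additivity of the Zykov join \eqref{add chrom no}, Theorem \ref{thm:color pman} applied to the factor $K'$ to get $X(K')\leq 2(d-k-1)+2=2d-2k$, and a separate bound on the spherical factor --- but your treatment of $X(S^k)$ is genuinely different and, in fact, more robust than the paper's. The paper splits on the parity of $k$ and plugs in values read off from \eqref{eq:color spheres}: in the odd case it uses $X(S^{2p+1})\leq 3(p+1)$, consistent with \eqref{eq:color spheres}, but in the even case it asserts $X(S^{2p})\leq 3p$, which does not follow from \eqref{eq:color spheres} (that display gives $3\lceil(2p+1)/2\rceil=3p+3$) and already fails at $p=0$ since $X(S^0)=1$; with the honest value $3p+3$ the paper's Case 1 only yields $X(K)\leq 2d-p+3$, which beats $2d+1$ only for $p\geq 2$. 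Your finer bookkeeping --- writing $X(S^k)=(k+1)+b$ with $b$ the number of odd-cycle factors and observing $b\leq\lfloor(k+1)/2\rfloor$, forced down to $k/2$ for even $k$ by the parity of $2m+\ell-1=k$ --- gives the uniform estimate $X(S^k)\leq 2k+1$ and closes exactly the case the paper's arithmetic mishandles. Both arguments, however, lean on the same unproved structural claim stated after Proposition \ref{prop:color sphere min}, namely that every $k$-sphere decomposes as a Zykov join of cycles (with $S^0$ factors when $k$ is even); this is false for general discrete spheres (e.g.\ the icosahedron), so you are right to flag it, and your suggested fallback of refining the dual-graph argument from the proof of Theorem \ref{thm:color pman} to save one color using the spherical factor is the natural way to remove that dependency.
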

\begin{proof}
Rewriting the identity \eqref{add chrom no}
$$X(K)=X(S^k)+X(K').$$
Consider the maximum bounds
\begin{myeq}\label{eq:max sphere pseudoman}
    X(S^k)\leq 3k+1 \text{ and } X(K')\leq 2(d-k)
\end{myeq}
where the first term is in \eqref{eq:color spheres} and the second by Theorem \ref{thm:color pman}. There are two possibilities.

Case $1$: $k$ is even $(k=2p)$. Then $X(S^k)=X(S^{2p})\leq 3p$. Using \eqref{add chrom no} and \eqref{eq:max sphere pseudoman},
$$X(K)=X(S^{2p})+X(K')\leq 2d-p.$$
It follows that $X(K)\leq 2d+1$ is always true for $p\geq 0$.

Case 2: $k$ is odd $(k=2p+1)$. Again following \eqref{add chrom no} and \eqref{eq:max sphere pseudoman},
$$X(K)=X(S^{2p+1})+X(K')\leq 2d-p+1.$$
Thus $X(K)\leq 2d+1$ is always true for $p\geq 0$. This completes the proof. 
\end{proof}

As shown in Proposition \ref{prop:color sphere min}, the chromatic number of an even-cycle sphere $S^k$ can be relatively small. Following Theorem \ref{thm:bound 2d+1} it gives rise to the following studies.

\textbf{Case A: $S^k$ is an even-cycle sphere.} By \eqref{eq:color spheres} and Theorem \ref{thm:bound 2d+1},
\begin{myeq}
   X(K)=X(S^k)+X(K')\leq 2d-k.
\end{myeq}
We then require $$X(K)\leq\lceil3(d+1)/2\rceil.$$ However this occurs if and only if 
$$k\geq 2d-\lceil(3d+1)/2\rceil$$
which is equivalent to saying that the dimension $k$ is close to $d$, more precisely, 
\begin{myeq}\label{eq: k close d}
    k=
\begin{cases}
    d/2-1, & d \text{ is even}\\
    (d+1)/2-1, & d\text{ is odd.}
\end{cases}
\end{myeq}

\textbf{Case B: $S^k$ is an odd-cycle sphere.} In this case the upper bound $$X(K)\leq\lceil(3d+1)/2\rceil$$ may no longer true for $S^k$ as an odd-cycle sphere by our observation in Table \ref{tab:counterex-ceil}. In the table we choose the maximum chromatic number $$X(K')_{\text{max}}=2(d-k)$$ for the subpseudomanifold $K'$. This provides a conservative estimate that accounts for the worst-case chromatic number, ensuring the subsequent inequalities hold in general.

We formulate the above analysis of \textbf{Cases A, B} in the following statement.
\begin{table}[ht]
    \centering
    \begin{tabular}{@{}cccccccc@{}}
        \toprule
        $d$ & $k$ & $S^k$ & $\text{dim} K'$ & $X(S^k)$ & $X(K')_{\text{max}}$ & $X(K)_{\text{max}}$ & X(K) \\
        \midrule
        3 & 1 & $C_5$           & 1 & 3  & 3 & 6 & 6\\
        5 & 1 & $C_5$           & 3 & 3  & 7 & 10 & 9\\
        5 & 3 & $C_5 + C_5$     & 1 & 6  & 3 & 9 & 9\\
        7 & 1 & $C_5$           & 5 & 3  & 11 & 14 & 12\\
        7 & 3 & $C_5 + C_5$     & 3 & 6  & 7 & 13 & 12\\
        7 & 5 & $C_5 + C_5+C_5$ & 1 & 9 & 3 & 12 & 12\\
        \bottomrule
    \end{tabular}
    \caption{We write notations $X(K)_{\text{max}}=X(S^k)+X(K')_{\text{max}}, X(K')_{\text{max}}=2(d-k),\text{ and } X(K)=\lceil3(d+1)/2\rceil$.}
    \label{tab:counterex-ceil}
\end{table}
 
\begin{theorem}\label{thm:bound ceiling 1.5(d+1)}
If a $d$-pseudomanifold $K=S^k+K'$ such that $S^k$ is an even-cycle $k$-sphere, $K'$ is a $(d-k-1)$-pseudomanifold, and $k$ is close to $d$ as in \eqref{eq: k close d}, $$d+1\leq X(K)\leq \lceil 3(d+1)/2\rceil$$
where $\lceil - \rceil$ is a ceiling function.
\end{theorem}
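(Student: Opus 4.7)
The lower bound $d+1 \leq X(K)$ is immediate from Theorem~A applied to the $d$-pseudomanifold $K$. For the upper bound, the plan is to assemble three ingredients already established in \S\ref{sec:chrom arith}. First, Zykov-join additivity \eqref{add chrom no} gives
\[
X(K) \;=\; X(S^k + K') \;=\; X(S^k) + X(K').
\]
Second, since $S^k$ is an even-cycle $k$-sphere, \eqref{eq:color spheres} yields $X(S^k) = 2\lceil (k+1)/2\rceil$. Third, applying Theorem~A to the $(d-k-1)$-pseudomanifold $K'$ gives $X(K') \leq 2(d-k-1)+2 = 2(d-k)$. Summing,
\[
X(K) \;\leq\; 2\lceil (k+1)/2\rceil + 2(d-k).
\]

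It then remains to verify that the right-hand side is at most $\lceil 3(d+1)/2\rceil$ under the hypothesis \eqref{eq: k close d}, which prescribes $k = d/2-1$ when $d$ is even and $k = (d-1)/2$ when $d$ is odd. I would dispatch this by splitting on the parity of $d$, plugging in the prescribed value of $k$, and simplifying $\lceil (k+1)/2\rceil$ in each case. This is exactly the threshold identified in the Case~A discussion immediately preceding the theorem statement, so the inequality is by design tight at the boundary; the remaining check is pure arithmetic.

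The main (indeed only) obstacle is the parity bookkeeping in the final step: both $\lceil (k+1)/2\rceil$ and $\lceil 3(d+1)/2\rceil$ jump according to parity, and the prescribed value of $k$ is itself parity-dependent on $d$, so in principle up to four subcases must be inspected. Once these are systematically tabulated, the inequality $2\lceil(k+1)/2\rceil + 2(d-k) \leq \lceil 3(d+1)/2\rceil$ collapses to a direct comparison of linear expressions in $d$ in each subcase, which completes the proof.
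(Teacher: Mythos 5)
Your plan follows the same route as the paper (join additivity, the even-cycle sphere bound, Theorem~A for $K'$, then arithmetic at the prescribed $k$), but the final step you defer as ``pure arithmetic'' does not actually close, and this is a genuine gap rather than bookkeeping. With the inputs you chose, the bound is
$X(K)\le 2\lceil(k+1)/2\rceil+2(d-k)$, and for the values of $k$ in \eqref{eq: k close d} this exceeds $\lceil 3(d+1)/2\rceil$ by exactly $1$ whenever $k$ is even. Concretely, take $d=5$, so $k=(d-1)/2=2$: your bound gives $2\lceil 3/2\rceil+2(5-2)=4+6=10$, while $\lceil 3\cdot 6/2\rceil=9$. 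Similarly $d=6$, $k=2$ gives $12>11$, and $d=9$, $k=4$ gives $16>15$. Two of your four parity subcases fail, so the proof as written is not correct.

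The source of the discrepancy is that $2\lceil(k+1)/2\rceil$ equals $k+2$ when $k$ is even, whereas the even-cycle $k$-sphere actually constructed in Proposition \ref{prop:color sphere min} (namely $C_{n}+\cdots+C_{n}$ for odd $k$, with an extra $S^0$ join factor for even $k$) satisfies $X(S^k)=k+1$ in \emph{both} parities --- the proof of that proposition shows $X(S^{2j-1})=2j$ and $X(S^{2j})=2j+1$. Using the sharper value $X(S^k)=k+1$ gives
$X(K)\le (k+1)+2(d-k)=2d-k+1$, and the condition $2d-k+1\le\lceil 3(d+1)/2\rceil$ is equivalent to $k\ge d/2-1$ for $d$ even and $k\ge (d-1)/2$ for $d$ odd, which is exactly the threshold \eqref{eq: k close d}. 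So the repair is to replace the literal formula from \eqref{eq:color spheres} by the chromatic number of the explicit even-cycle sphere from Proposition \ref{prop:color sphere min}; after that substitution your parity analysis does collapse to equalities at the boundary, matching the Case~A discussion in the paper.
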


\bibliographystyle{siam}
\bibliography{RECPseudoman}

@article{Kni21,
  title={Coloring Discrete Manifolds},
  author={Knill, Oliver},
  journal={arXiv preprint arXiv:2106.14374},
  year={2021}
}

@article{Kni18,
  title={Eulerian edge refinements, geodesics, billiards and sphere coloring},
  author={Knill, Oliver},
  journal={arXiv preprint arXiv:1808.07207},
  year={2018}
}

@article{Kni17,
  title={On the arithmetic of graphs},
  author={Knill, Oliver},
  journal={arXiv preprint arXiv:1706.05767},
  year={2017}
}

@article{Kni23,
  title={Characteristic topological invariants},
  author={Knill, Oliver},
  journal={arXiv preprint arXiv:2302.02510},
  year={2023}
}

@article {KE21,
    AUTHOR = {Betre, Kassahun and Salinger, Evatt},
     TITLE = {The inductive graph dimension from the minimum edge clique
              cover},
   JOURNAL = {Graphs Combin.},
  FJOURNAL = {Graphs and Combinatorics},
    VOLUME = {37},
      YEAR = {2021},
    NUMBER = {6},
     PAGES = {2637--2654},
      ISSN = {0911-0119,1435-5914},
   MRCLASS = {05C69 (05C70)},
  MRNUMBER = {4338752},
MRREVIEWER = {William\ D.\ Weakley},
       DOI = {10.1007/s00373-021-02381-y},
       URL = {https://doi.org/10.1007/s00373-021-02381-y},
}

@article {Zyk52,
    AUTHOR = {Zykov, A. A.},
     TITLE = {On some properties of linear complexes},
   JOURNAL = {Mat. Sbornik N.S.},
  FJOURNAL = {Mat. Sbornik N.S.},
    VOLUME = {24/66},
      YEAR = {1949},
     PAGES = {163--188},
   MRCLASS = {56.0X},
  MRNUMBER = {35428},
MRREVIEWER = {W.\ T.\ Tutte},
}

@article{Kni14,
  title={Coloring graphs using topology},
  author={Knill, Oliver},
  journal={arXiv preprint arXiv:1412.6985},
  year={2014}
}

@article {Fis77,
    AUTHOR = {Fisk, Steve},
     TITLE = {Variations on coloring, surfaces and higher-dimensional
              manifolds},
   JOURNAL = {Advances in Math.},
  FJOURNAL = {Advances in Mathematics},
    VOLUME = {25},
      YEAR = {1977},
    NUMBER = {3},
     PAGES = {226--266},
      ISSN = {0001-8708},
   MRCLASS = {57A05 (05C15 55A15 55A25)},
  MRNUMBER = {454979},
       DOI = {10.1016/0001-8708(77)90075-5},
       URL = {https://doi.org/10.1016/0001-8708(77)90075-5},
}

@article {Fis73,
    AUTHOR = {Fisk, Steve},
     TITLE = {Combinatorial structure on triangulations. {I}. {T}he
              structure of four colorings},
   JOURNAL = {Advances in Math.},
  FJOURNAL = {Advances in Mathematics},
    VOLUME = {11},
      YEAR = {1973},
     PAGES = {326--338},
      ISSN = {0001-8708},
   MRCLASS = {05C15 (57C15)},
  MRNUMBER = {335326},
MRREVIEWER = {R.\ C.\ Read},
       DOI = {10.1016/0001-8708(73)90015-7},
       URL = {https://doi.org/10.1016/0001-8708(73)90015-7},
}

@article {Fis1973,
    AUTHOR = {Fisk, Steve},
     TITLE = {Combinatorial structures on triangulations. {II}. {L}ocal
              colorings},
   JOURNAL = {Advances in Math.},
  FJOURNAL = {Advances in Mathematics},
    VOLUME = {11},
      YEAR = {1973},
     PAGES = {339--350},
      ISSN = {0001-8708},
   MRCLASS = {05C15 (57C15)},
  MRNUMBER = {335327},
MRREVIEWER = {R.\ C.\ Read},
       DOI = {10.1016/0001-8708(73)90016-9},
       URL = {https://doi.org/10.1016/0001-8708(73)90016-9},
}

@article {Fis74,
    AUTHOR = {Fisk, Steve},
     TITLE = {Combinatorial structures on triangulations. {III}. {C}oloring
              with regular polyhedra},
   JOURNAL = {Advances in Math.},
  FJOURNAL = {Advances in Mathematics},
    VOLUME = {12},
      YEAR = {1974},
     PAGES = {296--305},
      ISSN = {0001-8708},
   MRCLASS = {05C15 (57C15)},
  MRNUMBER = {335328},
MRREVIEWER = {R.\ C.\ Read},
       DOI = {10.1016/S0001-8708(74)80006-X},
       URL = {https://doi.org/10.1016/S0001-8708(74)80006-X},
}
\end{document}